\newtheorem{theorem}{Theorem}[section]
\newtheorem{lemma}[theorem]{Lemma}
\newtheorem{proposition}[theorem]{Proposition}
\theoremstyle{definition}
\newtheorem{definition}[theorem]{Definition}
\theoremstyle{remark}
\newtheorem{remark}[theorem]{Remark}
\numberwithin{equation}{section}
\begin{document}
\title[Gap theorems for ends of SMMS]
{Gap theorems for ends of smooth metric measure spaces}

\author{Bobo Hua}
\address{Bobo Hua: School of Mathematical Sciences, LMNS, Fudan University, Shanghai 200433, China; Shanghai Center for Mathematical Sciences, Fudan University, Shanghai 200433, China}\email{bobohua@fudan.edu.cn}

\author{Jia-Yong Wu}
\address{Jia-Yong Wu: Department of Mathematics, Shanghai University, Shanghai 200444, China}
\email{wujiayong@shu.edu.cn}

\thanks{}

\subjclass[2010]{Primary 53C20.}

\dedicatory{}

\date{\today}

\keywords{End, smooth metric measure space, Bakry-\'Emery Ricci tensor, Ricci soliton,
gap theorem.}
\begin{abstract}
In this paper, we establish two gap theorems for ends of smooth metric measure space
$(M^n, g,e^{-f}dv)$ with the Bakry-\'Emery Ricci tensor $\mathrm{Ric}_f\ge-(n-1)$ in a
geodesic ball $B_o(R)$ with radius $R$ and center $o\in M^n$. When $\mathrm{Ric}_f\ge 0$
and $f$ has some degeneration outside $B_o(R)$, we show that there exists an
$\epsilon=\epsilon(n,\sup_{B_o(1)}|f|)$ such that such a space has at most two ends
if $R\le\epsilon$. When $\mathrm{Ric}_f\ge\frac 12$ and $f(x)\le\frac 14d^2(x,B_o(R))+c$
for some constant $c>0$ outside $B_o(R)$, we can also get the same gap conclusion.
\end{abstract}
\maketitle


\section{Introduction and main results}
The Cheeger-Gromoll's splitting theorem \cite{[CG]} states that if a complete
Riemannian manifold $(M^n,g)$ with nonnegative Ricci curvature contains a line,
then $M^n$ is isometric to $N\times\mathbb{R}$ with the product metric, where $N$
is a Riemannian manifold with the Ricci curvature $\mathrm{Ric}(N)\ge0$. As a
consequence, any manifold with nonnegative Ricci curvature has at most two ends.
In \cite{[Cai]}, Cai studied a complete manifold $M^n$ with
$\mathrm{Ric}\ge-(n-1)K$ for some constant $K\geq 0$ in a geodesic ball $B_o(R)$
with radius $R$ and center $o\in M^n$ and $\mathrm{Ric}\ge 0$ outside $B_o(R)$.
He proved that the number of ends of such a manifold is finite and can be
estimated from above explicitly; see also Li and Tam \cite{[LT]} for an
independent proof by a different method. Later, Cai, Colding and Yang \cite{[CCY]}
gave a gap theorem for this class of manifolds, which states that there exists an
$\epsilon(n)$ such that such a manifold has at most two ends if $KR\le \epsilon(n)$.
In this paper we will extend the Cai-Colding-Yang result and get two gap theorems on
smooth metric measure spaces with the Bakry-\'Emery Ricci tensor. Our results
may be useful for understanding the topological information of smooth metric measure
spaces.

Recall that a complete smooth metric measure space (for short, SMMS) is a triple
$(M,g, e^{-f}dv)$, where $(M,g)$ is an $n$-dimensional complete Riemannian manifold,
$dv$ is the volume element of metric $g$, $f$ is a smooth potential function on $M$,
and $e^{-f}dv$ is called the weighted volume element.  On $(M,g,e^{-f}dv_g)$, given
a constant $m>0$, Bakry and \'Emery \cite{[BE]} introduced the $m$-Bakry-\'Emery Ricci tensor
\[
\mathrm{Ric}^m_f:=\mathrm{Ric}+\mathrm{Hess}\,f-\frac{df\otimes df}{m},
\]
where $\mathrm{Ric}$ is the Ricci tensor of $(M,g)$ and $\mathrm{Hess}$ is the
Hessian with respect to the metric $g$. When $m=\infty$, we have the
($\infty$-)Bakry-\'Emery Ricci tensor
\[
\mathrm{Ric}_f=\mathrm{Ric}^{\infty}_f.
\]
If $\mathrm{Ric}_f=\lambda g$ for some $\lambda\in\mathbb{R}$, then
$(M, g, e^{-f}dv)$ is called the gradient Ricci soliton, which is a
generalization of the Einstein manifold.  A Ricci soliton is called
shrinking, steady or expanding, if $\lambda>0$, $\lambda=0$, or
$\lambda<0$, respectively. Gradient Ricci soliton often arises
as a limit of dilations of singularities in the Ricci flow and it plays a
fundamental role in the Ricci flow \cite{[Hami]} and Perelman's resolutions
of the Poincar\'e Conjecture \cite{[P1],[P2],[P3]}.

The Bakry-\'Emery Ricci tensor $\mathrm{Ric}_f$ is linked with the $f$-Laplacian
$\Delta_f:=\Delta-\nabla f\cdot\nabla$ via the generalized Bochner formula
\[
\Delta_f|\nabla u|^2=2|\mathrm{Hess}\,u|^2+2g(\nabla u,\nabla\Delta_f u)
+2\mathrm{Ric}_f(\nabla u, \nabla u)
\]
for $u\in C^\infty(M)$. It plays an important role in the comparison geometry
of SMMSs; see \cite{[WW]}. The Bakry-\'Emery Ricci tensor is also related
to the probability theory and optimal transport. We refer
the reader to \cite{[Lo]}, \cite{[LV]}, \cite{[WW]} for further details.

Lichnerowicz~\cite{[Lic]}, and Wei and Wylie \cite{[WW]} independently extended
the classical Cheeger-Gromoll splitting theorem to a SMMS. It states that if
$(M,g, e^{-f}dv)$ with $\mathrm{Ric}_f\ge 0$ and bounded $f$ contains a line,
then $M=N\times\mathbb{R}$. Fang, Li and Zhang \cite{[FLZ]} showed that the
above splitting result remains true for only an upper bound on $f$. Lim
\cite{[Lim]} observed that the splitting result holds if $\nabla f\to 0$
at infinity. In \cite{[MuW]} Munteanu and Wang proved a splitting result
when $\mathrm{Ric}_f$ has a positive lower bound and $f$ satisfies certain
quadratic growth of distance function. Recently, G. Wu \cite{[Wug]} obtained
splitting results for the gradient Ricci soliton when some integral of the
Ricci curvature along a line is nonnegative. From these results, we see that
the above mentioned manifolds all have at most two ends.

Besides, Wei and Wylie \cite{[WW]} proved that any SMMS with
$\mathrm{Ric}_f>0$ for some bounded $f$ has only one end. The second author
\cite{[Wu]} studied a SMMS with $\mathrm{Ric}_f\ge 0$ outside a compact set
and proved that the number of ends of such a manifold is finite if $f$ has
at most sublinear growth outside the compact set.

Inspired by the gap theorem of manifolds \cite{[CCY]} and the number estimate for
ends of SMMSs \cite{[Wu]}, in this paper we first give a gap theorem for ends of a
smooth metric measure space when $\mathrm{Ric}_f\ge 0$ and $f$ has some degeneration
outside a compact set.
\begin{theorem}\label{Main}
Let $(M,g,e^{-f}dv)$ be an $n$-dimensional complete smooth metric measure
space. Fix a point $o\in M$ and $0<R<1$. Suppose $\mathrm{Ric}_f\ge-(n-1)K$
for some constant $K\ge0$ in the geodesic ball $B_o(R)$; outside $B_o(R)$,
suppose $\mathrm{Ric}_f\ge 0$
and
\begin{equation}\label{fcond}
\underset{r\to\infty}{\overline{\lim}}\frac{1}{r^2}\int^r_0f(\sigma(t))dt\le 0
\end{equation}
on any ray $\sigma$, where $r$ is the distance function starting from $\sigma(0)$.
There exists a constant $\epsilon(n,A)$ depending only on $n$ and $A$, where
$A:=\sup_{x\in B_o(1)}|f(x)|$, such that if
\[
\sqrt{K}R\le\epsilon(n, A),
\]
then $(M,g,e^{-f}dv)$ has at most two ends.
\end{theorem}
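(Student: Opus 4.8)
The plan is to argue by contradiction, adapting the method of Cai--Colding--Yang \cite{[CCY]} to the weighted setting. Suppose $(M,g,e^{-f}dv)$ has at least three ends $E_1,E_2,E_3$. Using the standard fact that from any point of a complete manifold a ray issues into each end (take minimizing geodesics to a sequence escaping to infinity in the end and pass to a limit; the limiting ray is eventually trapped in that end), I would fix unit-speed rays $\gamma_1,\gamma_2,\gamma_3$ from $o$ with $\gamma_k$ eventually in $E_k$, together with a compact set $\Omega\supset \overline{B_o(R)}$, $o\in\Omega$, with $E_1,E_2,E_3$ lying in distinct unbounded components of $M\setminus\Omega$. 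Then for every large $t$ a minimizing geodesic $\tau_t^{ij}$ from $\gamma_i(t)$ to $\gamma_j(t)$ must meet $\Omega$; since it is minimizing, the arc of $\tau_t^{ij}$ between its first and last points on $\overline{B_o(R)}$ has length at most $2R$, so $\mathrm{Ric}_f\ge 0$ holds along $\tau_t^{ij}$ except on a sub-arc of length $\le 2R$ inside $B_o(R)$.

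The crux of the argument is the weighted excess estimate
\[
2t-d\big(\gamma_i(t),\gamma_j(t)\big)=d\big(o,\gamma_i(t)\big)+d\big(o,\gamma_j(t)\big)-d\big(\gamma_i(t),\gamma_j(t)\big)\le\delta\qquad(i\ne j),
\]
for all large $t$, where $\delta=\delta(n,A,\sqrt K R)\to 0$ as $\sqrt K R\to 0$ with $A$ fixed. I would prove this by applying the Abresch--Gromoll excess comparison to the nonnegative function $x\mapsto d(x,\gamma_i(t))+d(x,\gamma_j(t))-d(\gamma_i(t),\gamma_j(t))$, which vanishes along $\tau_t^{ij}$, comparing it with a radial $f$-superharmonic barrier centered at $o$. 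Two comparison inputs are needed. Outside $B_o(R)$, where $\mathrm{Ric}_f\ge 0$, one has $\Delta_f d_p\le\frac{n-1}{d_p}+(\text{term from the growth of }f)$, and the $f$-term is exactly the one that hypothesis \eqref{fcond} forces to vanish in the limit --- this is the mechanism behind the Fang--Li--Zhang-type refinement \cite{[FLZ]} of the weighted splitting theorem, and it is here that \eqref{fcond} (rather than a pointwise bound on $f$) enters. Inside $B_o(R)\subset B_o(1)$, where $\mathrm{Ric}_f\ge-(n-1)K$ and $|f|\le A$, the Bakry--\'Emery (Riccati) comparison of Wei--Wylie \cite{[WW]} controls $\Delta_f d_p$ and shows that the \emph{extra} excess a minimizing geodesic can pick up while crossing a sub-arc of length $\le 2R$ in $B_o(R)$ is $O_{n,A}(\sqrt K R)$. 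Letting $t\to\infty$ annihilates the $\frac{n-1}{d_p}$-term, the $f$-growth term, and the part of the Abresch--Gromoll bound carrying a negative power of $\min\!\big(d(\cdot,\gamma_i(t)),d(\cdot,\gamma_j(t))\big)$ --- which tends to $\infty$ because the foot of $o$ on $\tau_t^{ij}$ stays in the fixed set $\Omega$ --- leaving only the $B_o(R)$-contribution. Reconciling the Bakry--\'Emery comparison, the degeneracy of $f$ via \eqref{fcond}, and the bad ball $B_o(R)$ in a single estimate with the stated quantitative dependence is the step I expect to be the main obstacle.

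Granting the estimate, the conclusion is reached as in \cite{[CCY]}. From $d(\gamma_i(t),\gamma_j(t))\ge 2t-\delta$ and the triangle inequality along the rays one gets $d(\gamma_i(s),\gamma_j(s'))\ge s+s'-\delta$ for all $s,s'\ge0$ and $i\ne j$; in particular the rays are pairwise ``almost opposite'' at $o$, and comparing with Euclidean hinges at $o$ on an appropriate scale shows that the initial directions $v_k=\gamma_k'(0)\in T_oM$ satisfy $\langle v_i,v_j\rangle\le-1+\delta'$ for $i\ne j$, with $\delta'=\delta'(n,A,\sqrt K R)\to 0$ as $\sqrt K R\to 0$. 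But then
\[
0\le|v_1+v_2+v_3|^2=3+2\sum_{i<j}\langle v_i,v_j\rangle\le -3+6\delta',
\]
which is impossible once $\delta'<\tfrac12$. Choosing $\epsilon(n,A)$ small enough that $\sqrt K R\le\epsilon(n,A)$ forces $\delta'<\tfrac12$ then yields the contradiction; hence $(M,g,e^{-f}dv)$ has at most two ends. (Equivalently, one may phrase the endgame through splitting: limiting $\tau_t^{ij}$ produces a line, the estimate makes its Busemann pair $f$-harmonic off $B_o(R)$ and --- once the gap renders the $B_o(R)$-defect harmless --- $f$-harmonic on all of $M$, so by the weighted splitting theorem $M=N\times\mathbb R$, contradicting the third end.)
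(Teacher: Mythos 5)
Your first stage (the excess estimate $d(o,\gamma_i(t))+d(o,\gamma_j(t))-d(\gamma_i(t),\gamma_j(t))\le\delta$ with $\delta\to 0$ as $\sqrt K R\to 0$) is in the right spirit, and you correctly identify where \eqref{fcond} must enter (the Fang--Li--Zhang mechanism). But you leave its proof as an acknowledged ``main obstacle,'' whereas the paper obtains it cleanly and with the explicit constant $\delta=6R$ from a localized splitting argument (Lemma \ref{splilem}, yielding Lemma \ref{inequa}), not from an Abresch--Gromoll comparison; notably, that estimate holds under the hypotheses of the theorem \emph{without} any smallness of $\sqrt K R$, so it cannot by itself be where the gap condition is used.

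The genuine gap is in your endgame. Passing from $d(\gamma_i(s),\gamma_j(s'))\ge s+s'-\delta$ to $\langle v_i,v_j\rangle\le -1+\delta'$ with $\delta'=\delta'(n,A,\sqrt K R)$ is a hinge (Toponogov-type) comparison at $o$, and no such comparison is available under a lower bound on $\mathrm{Ric}_f$ (or even on $\mathrm{Ric}$) alone. Concretely, the Euclidean approximation $d(\gamma_i(\eta),\gamma_j(\eta))\approx 2\eta\sin(\theta_{ij}/2)$ is valid only at scales $\eta$ small compared to the sectional curvature and injectivity radius at $o$ --- quantities the hypotheses do not control, and $o$ sits at the center of the bad ball --- while at such scales the inequality $d(\gamma_i(\eta),\gamma_j(\eta))\ge 2\eta-\delta$ with $\delta=6R$ fixed is vacuous. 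So $\delta'$ cannot be made to depend only on $(n,A,\sqrt K R)$, and the inequality $0\le|v_1+v_2+v_3|^2$ gives no contradiction. Your parenthetical alternative (``the gap renders the $B_o(R)$-defect harmless, so the Busemann pair is $f$-harmonic on all of $M$ and $M$ splits'') asserts exactly the point at issue without a mechanism: $\mathrm{Ric}_f$ is \emph{not} nonnegative on $B_o(R)$, so the Busemann functions are not $f$-sub/superharmonic there. The paper's Proposition \ref{pro} is precisely the device that circumvents angles: one builds a radial barrier $G$ with $\Delta_f G\ge 2(n+4A-1)$ via the Wei--Wylie comparison, applies the maximum principle on an annulus $B_x(1)\setminus B_x(c)$ to $u-G(d(x,\cdot))$ where $u=b_{\gamma_1}+b_{\gamma_2}$, and derives an upper bound for $u$ at $\gamma_3(1-\delta)$ that contradicts the lower bound $2(1-\delta)-12R$ coming from Lemma \ref{inequa} once $R\le\epsilon(n,A)$. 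You would need to replace your angle argument with something of this kind (or with genuine Cheeger--Colding almost-splitting technology) for the proof to close.
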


\begin{remark}
There exist many examples satisfying Theorem \ref{Main}. Let
$M=\mathbb{S}^{n-1}\times\mathbb{R}$ ($n\geq 3$) with the standard
metric and $f(x,t)=f(t)=1+\int_0^t ds\int_0^s\xi(\tau)d\tau$ for
$(x,t)\in \mathbb{S}^{n-1}\times\mathbb{R}$. Here $\xi(t)$ is a smooth even function
on $\mathbb{R}$ such that $\xi(t)=-(n-1), t\in[0,\tfrac{R}{2}]$,
$\xi(t)\ge-(n-1), t\in(\frac{R}{2},R)$ and $0\le\xi(t)\le |t|^{-\alpha}, t\in[R,\infty)$,
for $R>0$ and $\alpha>1$. One easily checks that $f$ is smooth, even and it
satisfies $f''(t)=\xi(t)$. So $\mathrm{Ric}_f\ge-(n-1)$ and $\mathrm{Ric}_f\ge 0$
outside $B_o(R)$. Since $f(t)\le C|t|^{2-\alpha}+C$ for $|t|\gg 1$, $f(0)=1$ and
$A=\sup_{x\in B_o(1)}|f(x)|\ge 1$, they satisfy the conditions in
Theorem \ref{Main} for $K=1$ and $M$ has two ends. Another example is that:
as in \cite{[CCY]}, by applying the metric surgery techniques to manifold $M=\mathbb{S}^1\times\mathbb{R}\times\mathbb{S}^{n-2}$, $n\ge 4$, one can
get an $n$-dimensional complete manifold $M$ of infinite homotopy
type with exactly two ends and with $\mathrm{Ric}\ge-\delta$ and with
$\mathrm{Ric}\ge 0$ outside a small ball. Let $f(x_1, t,x_2)=-t$ on $M$,
and it satisfies \eqref{fcond}. Then $\mathrm{Ric}_f\ge-\delta$ and
$\mathrm{Ric}_f\ge 0$ outside the small ball.
\end{remark}

\begin{remark}
If $f$ grows sublinearly, then \eqref{fcond} automatically holds. If
$\nabla f\to 0$ at infinity, \eqref{fcond} still holds due to Lim \cite{[Lim]}.
If $f$ is constant, the theorem recovers the Cai-Colding-Yang result
\cite{[CCY]}. Theorem 1.1 is obvious suitable to gradient steady Ricci soliton
because the potential $f$ of steady gradient Ricci solitons is negative
linear outside a compact set. It is an interesting question if one can weaken
the assumption of $f$ such that it is suitable to the quadratic growth of $f$
on gradient shrinking Ricci solitons.
\end{remark}
\begin{remark}
When we say that $E$ is an end of the manifold $M$ we mean
that it is an end with respect to some compact subset of the manifold.
If $R_1\le R_2$, then the number of ends with respect to $B_o(R_1)$
is less than the number of ends with respect to $B_o(R_2)$. So we assume
the radius $R<1$ in the theorem seems to be sensible.
\end{remark}

We can apply a similar argument to get a gap theorem under the conditions of
$\mathrm{Ric}^m_f$ (without any assumption on $f$). It
states that when $\mathrm{Ric}^m_f\ge-(n-1)K$ in $B_o(R)$ and $\mathrm{Ric}^m_f\ge 0$
outside $B_o(R)$, there exists an $\epsilon=\epsilon(n+m)$ depending only on $n+m$
such that $M$ has at most two ends if $\sqrt{K}R\le\epsilon$.

Furthermore, inspired by the Munteanu-Wang splitting theorem \cite{[MuW]}, we may give another
gap theorem for ends when $\mathrm{Ric}_f$ has a positive lower bound and $f$ grows
quadratically outside a compact set. More precisely, after a suitable scaling
of the metric, we may in fact assume $\mathrm{Ric}_f\ge-(n-1)$ in a ball and get that
\begin{theorem}\label{Main2}
Let $(M,g,e^{-f}dv)$ be an $n$-dimensional complete smooth metric measure space.
Fix a point $o\in M$ and $0<R<1$. Suppose $\mathrm{Ric}_f\geq-(n-1)$ in the
geodesic ball $B_o(R)$; outside $B_o(R)$, suppose $\mathrm{Ric}_f\ge\frac 12$
and $f(x)\le\frac 14 d^2(x,B_o(R))+c$ for some constant $c>0$. There exists a
constant $\epsilon(n,A)$ depending only on $n$ and $A$, where
$A:=\sup_{x\in B_o(1)}|f(x)|$, such that if
\[
R\le\epsilon(n, A),
\]
then $(M,g,e^{-f}dv)$ has at most two ends.
\end{theorem}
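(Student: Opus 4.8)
The plan is to adapt the Cai--Colding--Yang scheme (as used for Theorem \ref{Main}) to the setting of a positive lower bound on $\mathrm{Ric}_f$ together with quadratic control on $f$, replacing the Laplacian comparison for $\mathrm{Ric}\ge 0$ by the weighted comparison available under $\mathrm{Ric}_f\ge\tfrac12$ and quadratic $f$-growth, in the spirit of Munteanu--Wang \cite{[MuW]}. Suppose for contradiction that $M$ has at least three ends. Then one can find, with respect to the compact set $B_o(R)$, three distinct ends, and by the usual construction (see \cite{[CCY], [Cai], [Wu]}) produce geodesic rays $\gamma_1, \gamma_2$ issuing from a point near $o$ into two different ends, together with a sequence of minimizing geodesics $\sigma_i$ connecting points $p_i \in \gamma_1$ and $q_i \in \gamma_2$ far out in their respective ends. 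The segments $\sigma_i$ pass through a bounded neighborhood of $o$, and a subsequential limit yields a ray, or almost-line, configuration that we will contradict using excess estimates.

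First I would set up the $f$-excess function $e(x) = b_{\gamma_1}(x) + b_{\gamma_2}(x)$ built from the (finite-time) Busemann-type functions along the two rays, and estimate it at points on the connecting segments $\sigma_i$ near $o$. Outside $B_o(R)$ we have $\mathrm{Ric}_f\ge\tfrac12>0$ and $f(x)\le\tfrac14 d^2(x, B_o(R))+c$; this is exactly the regime in which the weighted mean curvature comparison gives an effective upper bound for $\Delta_f r$ along geodesics leaving $B_o(R)$, with the quadratic bound on $f$ absorbing the error term coming from $\langle\nabla f,\nabla r\rangle$. Inside $B_o(R)$ we only have $\mathrm{Ric}_f\ge-(n-1)$, but since the radius is $R\le\epsilon(n,A)$ and $|f|\le A$ on $B_o(1)$, the contribution of this bad region to the comparison estimate is $O(\epsilon)$ with a constant depending only on $n$ and $A$. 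Combining, one obtains an upper bound on $e$ along $\sigma_i$ of the form $C_1(n,A)\,\epsilon + o(1)$ as $i\to\infty$, where the $o(1)$ comes from the points $p_i, q_i$ escaping to infinity.

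Next I would derive a lower bound for the excess that contradicts this when $\epsilon$ is small. The key point is that the three-ends hypothesis forces the connecting segments to genuinely ``turn'': the angle estimate (again from \cite{[CCY]}, or from a weighted Toponogov-type comparison using $\mathrm{Ric}_f\ge\tfrac12$ outside $B_o(R)$) shows that $b_{\gamma_1}$ and $b_{\gamma_2}$ cannot both be small along $\sigma_i$ near the midpoint region unless the rays essentially coincide, which they do not since they go into distinct ends separated by $B_o(R)$. Quantitatively, one gets $e(x_i) \ge c_0 > 0$ for a suitable point $x_i$ on $\sigma_i$, with $c_0$ independent of $\epsilon$. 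Choosing $\epsilon(n,A)$ so small that $C_1(n,A)\,\epsilon < c_0/2$ then yields a contradiction for all large $i$, proving that $M$ has at most two ends.

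The main obstacle I expect is making the weighted comparison estimates uniform across the junction $\partial B_o(R)$: inside the ball the lower bound on $\mathrm{Ric}_f$ is negative and $f$ is only controlled pointwise by $A$, while outside we switch to $\mathrm{Ric}_f\ge\tfrac12$ and a \emph{quadratic} upper bound on $f$. Tracking how the $f$-Laplacian comparison constants depend on $R$, $A$, and the quadratic coefficient $\tfrac14$ (and verifying that the quadratic growth is precisely compatible with the $\tfrac12$ lower bound, as in the Munteanu--Wang splitting) is the delicate part; one must check that the error terms do not blow up as the segments $\sigma_i$ sweep through the thin shell $B_o(1)\setminus B_o(R)$ and that the final $\epsilon$ can be chosen depending only on $n$ and $A=\sup_{B_o(1)}|f|$, not on $c$.
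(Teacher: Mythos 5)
Your overall strategy (adapt the Cai--Colding--Yang scheme, with the weighted Laplacian comparison and a Munteanu--Wang-type splitting as the new inputs) is the right one and is what the paper does, but the mechanism by which you reach a contradiction is misplaced, and as written the argument does not close. You form the excess $e=b_{\gamma_1}+b_{\gamma_2}$ from rays into two of the three ends and then try to contradict an $O(\epsilon)$ \emph{upper} bound for $e$ on minimizing segments $\sigma_i$ joining $\gamma_1$ to $\gamma_2$ with a uniform positive \emph{lower} bound $e(x_i)\ge c_0$ coming from an ``angle/turning'' estimate. But on such segments the excess genuinely is small: if $p_i=\gamma_1(s_i)$, $q_i=\gamma_2(s_i)$ and $x\in\sigma_i$, then $b_{\gamma_j}(x)\le d(x,\gamma_j(s_i))-s_i$ gives $e(x)\le d(p_i,q_i)-2s_i\le 0$, while the end-separation inequality gives $e\ge-6R$ everywhere; so there is no contradiction available on $\sigma_i$. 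Moreover your claimed lower bound never uses the third end, and with only two ends (e.g.\ a cylinder) no contradiction can exist, so something must be wrong. The third end enters differently: from $d(\gamma_i(t_1),\gamma_j(t_2))>t_1+t_2-6R$ for distinct ends (Lemma \ref{inequa2}, which is where $\mathrm{Ric}_f\ge\tfrac12$ and the quadratic bound on $f$ are actually consumed, via the local Munteanu--Wang splitting ruling out lines that stay far from $B_o(R)$) one gets the \emph{lower} bound $e(\gamma_3(1-\delta))\ge 2(1-\delta)-12R$, i.e.\ $e$ is forced to grow at essentially its maximal Lipschitz rate along the third ray. The contradiction is with an \emph{upper} bound at that same point: the barrier/maximum-principle argument of Proposition \ref{pro}, using the comparison function $G(r)$ built from $\Delta_f r\le(n+4A-1)\coth r$ together with $e(o)=0$, $e\ge-6\epsilon$, $\operatorname{Lip}(e)\le 2$ and $\Delta_f e\le 2(n+4A-1)$, yields $e(x)<2-2\delta-12\epsilon$ for all $x\in S_o(1-\delta)$; taking $R\le\epsilon(n,A)$ makes the two bounds incompatible.

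A secondary point: the difficulty you anticipate about ``making the comparison uniform across $\partial B_o(R)$'' is not where the work is. For the upper bound on $\Delta_f e$ one only needs the global bound $\mathrm{Ric}_f\ge-(n-1)$ and the Wei--Wylie comparison with constant $n+4A-1$; the sharper hypotheses outside $B_o(R)$ are used exclusively in the splitting-type lemma that produces the end-separation inequality. Without isolating that lemma, your proposal contains no quantitative input that distinguishes three ends from two.
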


\begin{remark}
We give an example satisfying Theorem \ref{Main2}.
Let $M=\mathbb{S}^{n-1}\times\mathbb{R}$ ($n\ge 3$) with the standard metric.
For $0<R<1$, let $f(x,t)=f(t)=1+a+bt+\int_0^t ds\int_0^s\eta(\tau)d\tau$ for
$(x,t)\in \mathbb{S}^{n-1}\times\mathbb{R}$, where $a, b$ are chosen satisfying
$f(R)=1, f'(R)=0$. Here $\eta(t)$ is a smooth even function on $\mathbb{R}$
such that $\eta(t)=-(n-1)$, $t\in[0,\tfrac{R}{2}]$, $\eta(t)\ge-(n-1)$,
$t\in(\tfrac{R}{2},R)$ and $\eta(t)=\tfrac 12$, $t\in[R,\infty)$.  One easily
checks that $f$ is smooth, even and $f''(t)=\eta(t)$. So $\mathrm{Ric}_f\ge-(n-1)$ and
$\mathrm{Ric}_f\ge \tfrac 12$ outside $B_o(R)$. Since $f(t)=\frac14(|t|-R)^2+1$ for
$|t|\ge R$, $f(R)=1$ and $A\ge 1$, they satisfy the conditions of Theorem
\ref{Main2} and $M$ has two ends.
\end{remark}

\begin{remark}
If the assumption $R\le\epsilon(n, A)$ in Theorem \ref{Main2} is removed, we can show that
the number of ends for such a space is finite. Moreover, we can provide an
explicit upper bound for the number; see Appendix of the paper.
\end{remark}

We would like to point out that Munteanu and Wang systematically studied the
number of ends on gradient Ricci solitons. In \cite{[MuWa11]} they proved that
any nontrivial steady gradient Ricci soliton has only one end. In \cite{[MuWa12]}
they showed that the expanding gradient Ricci soliton $\mathrm{Ric}_f=-\tfrac 12$ with scalar
curvature $\mathrm{S}\ge-\frac{n-1}{2}$ has at most two ends. They also considered
a similar problem for SMMS with $\mathrm{Ric}_f\ge-\tfrac 12$. In \cite{[MuWa14e]}
they proved that any shrinking K\"ahler gradient Ricci soliton has only one end.
Recently, Munteanu, Schulze and Wang \cite{[MSW]} showed that the number of ends
is finite on shrinking gradient Ricci soliton when the scalar curvature satisfies
certain scalar curvature integral at infinity.

The proof of our theorems uses the argument of Cai-Colding-Yang \cite{[CCY]}
and it relies on a Wei-Wylie's weighted Laplacian comparison \cite{[WW]}
and geometric inequalities for two different ends (see Lemmas \ref{inequa}
and \ref{inequa2}), which are derived by locally analyzing splitting
theorems. We would like to point out that Cai-Colding-Yang's proof depends
on a delicate constructional function $G(r)$, which satisfies certain
Laplacian equation with the Dirichlet boundary condition. In our case,
the function $G(r)$ constructed in Proposition \ref{pro} does not satisfy the
$f$-Laplacian equation, but it is sufficient to deduce our desired results.

The paper is organized as follows. In Section \ref{sec2}, we give some basic
concepts and results on SMMSs. In Section \ref{sec3},
we apply the weighted Laplacian comparison and geometric inequalities for ends
in Section \ref{sec2} to prove our theorems. In Appendix, we give an upper bound
for the number of ends of a class of SMMSs.


\section{Preliminary} \label{sec2}
In this section, we introduce some results about SMMSs,
which will be used in the proof of our results. We first
recall a weighted Laplacian comparison due to Wei and Wylie \cite{[WW]}.
\begin{lemma}\label{comp}
Let $(M,g,e^{-f}dv)$ be an $n$-dimensional complete smooth metric measure
space with a base point $o\in M$. If $\mathrm{Ric}_f\geq-(n-1)K$ for some
constant $K>0$, then
\[
\Delta_f(r)\le(n+4A-1)\sqrt{K}\coth(\sqrt{K}r)
\]
along any minimal geodesic segment $r$ from $o$, where $A=A(o,r)=\sup_{x\in B_o(r)}|f(x)|$.
\end{lemma}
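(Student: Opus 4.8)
The plan is to run the classical Bochner/Riccati comparison for the mean curvature of geodesic spheres, but in an \emph{integrated} form weighted by the model warping function, rather than pointwise; the reason for this is explained at the end. Fix a minimal geodesic $\gamma$ issuing from $o$ and, abbreviating $f(r):=f(\gamma(r))$, let $m(r):=\Delta r$ be the unweighted mean curvature of the geodesic spheres along $\gamma$. On the segment (before the cut point) $r$ is smooth and $m$ satisfies the usual Riccati inequality $m'+\frac{m^2}{n-1}+\mathrm{Ric}(\partial_r,\partial_r)\le 0$. Writing $\mathrm{Ric}(\partial_r,\partial_r)=\mathrm{Ric}_f(\partial_r,\partial_r)-\mathrm{Hess}\,f(\partial_r,\partial_r)$ and using $\mathrm{Hess}\,f(\partial_r,\partial_r)=f''$ along $\gamma$, the hypothesis $\mathrm{Ric}_f\ge-(n-1)K$ turns this into $m'+\frac{m^2}{n-1}\le(n-1)K+f''$.

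Next I would introduce the model function $\mathrm{sn}(r):=\sinh(\sqrt{K}r)/\sqrt{K}$, which satisfies $\mathrm{sn}''=K\,\mathrm{sn}$, $\mathrm{sn}(0)=0$, $\mathrm{sn}'(0)=1$, and $\mathrm{sn},\mathrm{sn}',\mathrm{sn}''\ge 0$ on $[0,\infty)$. Multiplying the previous inequality by $\mathrm{sn}^2$ and completing the square via $\frac{\mathrm{sn}^2m^2}{n-1}-2\mathrm{sn}\,\mathrm{sn}'\,m=\frac{1}{n-1}\big(\mathrm{sn}\,m-(n-1)\mathrm{sn}'\big)^2-(n-1)(\mathrm{sn}')^2$, together with the identity $(n-1)K\,\mathrm{sn}^2=(n-1)\mathrm{sn}''\mathrm{sn}$, should give $(\mathrm{sn}^2m)'\le(n-1)(\mathrm{sn}\,\mathrm{sn}')'+\mathrm{sn}^2f''$. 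Subtracting $(\mathrm{sn}^2f')'=\mathrm{sn}^2f''+2\mathrm{sn}\,\mathrm{sn}'f'$ then converts this into an inequality for the \emph{weighted} mean curvature $m_f:=\Delta_f r=m-f'$, namely $(\mathrm{sn}^2m_f)'\le(n-1)(\mathrm{sn}\,\mathrm{sn}')'-2\mathrm{sn}\,\mathrm{sn}'f'$.

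Finally I would integrate from $0$ to $r$. The boundary contributions at $0$ vanish, since $m(s)=\tfrac{n-1}{s}+O(s)$ and $\mathrm{sn}(s)\sim s$ force $\mathrm{sn}^2(s)m_f(s)\to 0$ and $\mathrm{sn}(s)\mathrm{sn}'(s)\to 0$. An integration by parts rewrites $-2\int_0^r\mathrm{sn}\,\mathrm{sn}'f'\,ds=-2\mathrm{sn}(r)\mathrm{sn}'(r)f(r)+2\int_0^r\big((\mathrm{sn}')^2+\mathrm{sn}\,\mathrm{sn}''\big)f\,ds$, and because $\mathrm{sn},\mathrm{sn}',\mathrm{sn}''\ge0$ the bound $|f|\le A$ on $B_o(r)$ controls the first term by $2A\,\mathrm{sn}(r)\mathrm{sn}'(r)$ and, using $\int_0^r\big((\mathrm{sn}')^2+\mathrm{sn}\,\mathrm{sn}''\big)\,ds=\mathrm{sn}(r)\mathrm{sn}'(r)$, the second by $2A\,\mathrm{sn}(r)\mathrm{sn}'(r)$ as well. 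Collecting terms yields $\mathrm{sn}^2(r)m_f(r)\le(n-1+4A)\,\mathrm{sn}(r)\mathrm{sn}'(r)$; dividing by $\mathrm{sn}^2(r)$ and noting $\mathrm{sn}'/\mathrm{sn}=\sqrt{K}\coth(\sqrt{K}r)$ gives the asserted bound $\Delta_f r\le(n+4A-1)\sqrt{K}\coth(\sqrt{K}r)$.

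The hard part is not any single computation but the structural obstruction that forces this integrated argument: running Riccati comparison directly on $m_f$ fails, because the Bochner formula controls $m_f'+\frac{m^2}{n-1}$ with the \emph{unweighted} $m$, and substituting $m=m_f+f'$ reintroduces an uncontrolled $(\partial_r f)$-term together with cross terms that cannot be absorbed. The device that circumvents this is to keep $m$ inside the squared (Riccati) term, carry out the comparison at the level of $\mathrm{sn}^2 m$, and only pass to $m_f$ \emph{after} the quadratic term has been discarded. The price is the remaining $\mathrm{sn}^2 f''$ term, which is removed by one integration by parts — and that is precisely why only the $C^0$ bound $A=\sup_{B_o(r)}|f|$, and no bound on $\nabla f$, enters the estimate.
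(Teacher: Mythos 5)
Your proof is correct: the Riccati inequality for $m=\Delta r$, the completion of the square at the level of $\mathrm{sn}^2 m$, the passage to $m_f=m-f'$ after discarding the quadratic term, and the single integration by parts that trades $f'$ for $|f|\le A$ all check out, and the constants $(n-1)+2A+2A=n+4A-1$ come out right. The paper itself offers no proof of this lemma — it simply quotes it from Wei--Wylie — and your argument is essentially the original Wei--Wylie mean-curvature-comparison proof of that cited result, so there is nothing to reconcile.
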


We also have a weighted volume comparison of Wei and Wylie \cite{[WW]}.
The weighted volume is denoted by $V_f(B_x(R)):=\int_{B_x(R)}e^{-f}dv$.
\begin{lemma}\label{vcomp}
Let $(M,g,e^{-f}dv)$ be an $n$-dimensional complete
smooth metric measure space. If $\mathrm{Ric}_f\ge-(n-1)K$ for some
constant $K>0$, then
\begin{equation}\label{volcomp}
\frac{V_f(B_x(r_2))}{V_f(B_x(r_1))}
\leq\frac{\int^{r_2}_0(\sinh^{n-1+4A}\sqrt{K}\,t)dt}{\int^{r_1}_0(\sinh^{n-1+4A}\sqrt{K}\,t)dt}
\end{equation}
for any $x\in M$ and $0<r_1<r_2$, where $A=A(x,r_2)=\sup_{y\in B_x(r_2)}|f(y)|$.
\end{lemma}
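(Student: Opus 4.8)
The plan is to run the classical Bishop--Gromov scheme, but with the \emph{weighted} area element in geodesic polar coordinates and with Lemma~\ref{comp} playing the role of the ordinary Laplacian comparison. Fix $x\in M$ and work in polar coordinates centered at $x$: write the Riemannian volume form on the segment domain as $dv=\mathcal{A}(r,\theta)\,dr\,d\theta$, where $\theta$ ranges over the unit sphere in $T_xM$, and set $\mathcal{A}_f(r,\theta):=e^{-f}\mathcal{A}(r,\theta)$ for the weighted area element, so that $V_f(B_x(r))=\int_\theta\int_0^{\min\{r,\,c(\theta)\}}\mathcal{A}_f(s,\theta)\,ds\,d\theta$, with $c(\theta)$ the distance to the cut locus in the direction $\theta$. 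Since $\partial_r\log\mathcal{A}(r,\theta)=\Delta r$ (first variation of area) and $\partial_r f=\langle\nabla f,\nabla r\rangle$, one gets the basic identity
\[
\partial_r\log\mathcal{A}_f(r,\theta)=\Delta r-\langle\nabla f,\nabla r\rangle=\Delta_f r .
\]

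Next I would feed in Lemma~\ref{comp}, applied with base point $x$. For every $r\in(0,r_2]$ before the cut point in direction $\theta$ it gives
\[
\partial_r\log\mathcal{A}_f(r,\theta)=\Delta_f r\le\bigl(n-1+4A(x,r)\bigr)\sqrt{K}\coth(\sqrt{K}r)\le(n-1+4A)\sqrt{K}\coth(\sqrt{K}r),
\]
where $A=A(x,r_2)\ge A(x,r)$ and we used $\coth>0$ to pass to the uniform exponent. Integrating this differential inequality from $r_1$ to $r$ and exponentiating (using $\int(n-1+4A)\sqrt{K}\coth(\sqrt{K}t)\,dt=(n-1+4A)\log\sinh(\sqrt{K}t)$) yields the monotonicity
\[
\text{for fixed }\theta,\quad r\longmapsto\frac{\mathcal{A}_f(r,\theta)}{\sinh^{n-1+4A}(\sqrt{K}r)}\ \text{ is non-increasing on }(0,r_2]
\]
up to the cut point. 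Adopting the usual convention $\mathcal{A}_f(r,\theta)=0$ for $r>c(\theta)$, the monotonicity persists across the cut locus since $\mathcal{A}_f$ can only jump downward there.

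Finally I would invoke the elementary fact that if $p,q$ are nonnegative on $(0,r_2]$ with $q>0$ and $p/q$ non-increasing, then $r\mapsto\frac{\int_0^r p}{\int_0^r q}$ is non-increasing, so $\frac{\int_0^{r_2}p}{\int_0^{r_1}p}\le\frac{\int_0^{r_2}q}{\int_0^{r_1}q}$ for $0<r_1<r_2$. Taking $p(r)=\int_\theta\mathcal{A}_f(r,\theta)\,d\theta$ (with the cut-locus convention above) and $q(r)=\sinh^{n-1+4A}(\sqrt{K}r)$, and noting $\int_0^r p(s)\,ds=V_f(B_x(r))$, gives precisely \eqref{volcomp}. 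The only genuinely delicate point is the behavior at the cut locus: the pointwise inequality $\Delta_f r\le(n-1+4A)\sqrt{K}\coth(\sqrt{K}r)$ holds in the smooth sense only inside the segment domain, and one must check that it propagates to the integrated monotonicity of $p/q$. This is handled exactly as in the unweighted Bishop--Gromov theorem — via the zero-extension convention together with the fact that $\Delta_f r$ satisfies the comparison in the barrier (equivalently, distributional) sense globally — and requires no new idea in the weighted setting; it is the step I would be most careful to write out.
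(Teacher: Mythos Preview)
Your argument is correct and is precisely the standard Bishop--Gromov scheme adapted to the weighted setting, using Lemma~\ref{comp} as input. Note, however, that the paper does not supply its own proof of Lemma~\ref{vcomp}: it simply quotes the result from Wei--Wylie \cite{[WW]}. Your write-up is therefore not so much an alternative as a filling-in of what the paper leaves to the reference; the approach you outline is essentially the one Wei and Wylie use, so there is nothing to compare on the level of method.
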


Then we recall some definitions of geometric quantities such as line, ray, end and
asymptotic ray on Riemannian manifolds.
\begin{definition}
On a complete Riemannian manifold $(M,g)$, we say that a geodesic
$\gamma:(-\infty,+\infty)\to M$ is called \emph{line} if
\[
d(\gamma(s),\gamma(t))=|s-t|
\]
for all $s$ and $t$. We say that a geodesic $\gamma:[0,+\infty)\to M$ is called
\emph{ray} if
\[
d(\gamma(0),\gamma(t))=t
\]
for all $t>0$. As we all known if $M$ is complete noncompact, it must contain a ray.
\end{definition}

\begin{definition}
On a manifold $M$ with a base point $o\in M$, two rays $\gamma_1$
and $\gamma_2$ starting at $o$ are called cofinal if for any $R>0$ and
$t>R$, $\gamma_1(t)$ and $\gamma_2(t)$ lie in the same component of
$M\setminus B_o(R)$. An equivalent class of cofinal rays is called an
\emph{end} of $M$. In this paper we let $[\gamma]$ be the class
of the ray $\gamma$.
\end{definition}
One readily checks that this definition is independence of the base point $o$
and the complete metric on manifold $M$. Thus, the number of ends is a topological
invariant of $M$.

Next we recall the definition of the Busemann function and its properties on a
complete SMMS $(M,g,e^{-f}dv)$. The \emph{Busemann function}
associated to each ray $\gamma\subset M$ is defined by
\[
b_{\gamma}(x):=\lim_{t\to \infty}(d(x,\gamma(t))-t).
\]
By the triangle inequality, we know that $b_{\gamma}(x)$ is Lipschitz continuous with Lipschitz
constant $1$ and hence it is differential almost everywhere. At the points where $b_\gamma$
is not smooth we interpret the $f$-Laplacian in the following sense of barriers.

\begin{definition}
A lower barrier for a continuous function $h$ at the point $p\in M$ is a
$C^2$ function $h_p$, defined in a neighborhood $U$ of $p$, such that
\[
h_p(p)=h \quad\mathrm{and}\quad h_p(x)\leq h(x), \quad x\in U.
\]
A continuous function $h$ on $M$ satisfies $\Delta_f h\geq a$ at
$p$ in the barrier sense, if for every $\epsilon>0$, there exists a lower barrier function
$h_{p,\epsilon}$ at $p$ such that $\Delta_f h_{p,\epsilon}\geq a-\epsilon$.
A continuous function $h$ satisfies $\Delta_f h\leq a$ in the barrier sense
is similarly defined.
\end{definition}

\begin{definition}
For a fixed point $p\in M$, let $\alpha(t)$ be a minimal geodesic from
$p$ to ray $\gamma(t)$. As $t\to \infty$, $\alpha(t)$ has a convergent
subsequence which converges to a ray at $p$. Such a ray is called an
\emph{asymptotic ray} to $\gamma(t)$ at $p$.
\end{definition}

For a line $\gamma$ in $M$, there exist rays $\gamma^{+}:[0,\infty)\to M$
by $\gamma^{+}(t)=\gamma(t)$ and $\gamma^{-}:[0,\infty)\to M$ by
$\gamma^{-}(t)=\gamma(-t)$. Similar to the above procedure, we can let
$b^{+}_{\gamma}$(or $b^{-}_{\gamma}$, respectively) be the associated Busemann
function of $\gamma^{+}$(or $\gamma^-$, respectively).

\vspace{.1in}

Next we will introduce two geometric inequalities for two different ends
under two types of curvature assumptions, which are important
in the proof of Theorems \ref{Main} and \ref{Main2}. On one hand, recall that
Fang, Li and Zhang \cite{[FLZ]} proved a Cheeger-Gromoll splitting theorem when
$\mathrm{Ric}_f\ge 0$ and $f$ satisfy some degeneration condition. As in \cite{[Wu]},
we can easily apply the Fang-Li-Zhang arguments locally and get that
\begin{lemma}\label{splilem}
Let $N$ be the $\delta$-tubular neighborhood of a line $\gamma$ on
$(M,g,e^{-f}dv)$. Suppose that from every point $p$ in $N$, there
are asymptotic rays to $\gamma^{\pm}$ such that $\mathrm{Ric}_f\ge 0$ and
\eqref{fcond} on both asymptotic
rays. Then through every point in $N$, there exists a line $\alpha$
such that
\[
b_\gamma^+(\alpha^+(t))=t,\quad b_\gamma^-(\alpha^-(t))=t.
\]
\end{lemma}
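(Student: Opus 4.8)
The plan is to localize the weighted Cheeger--Gromoll splitting argument of Fang--Li--Zhang \cite{[FLZ]}, in the spirit of \cite{[Wu]}, to the tube $N$. Fix $p\in N$ and let $b^{\pm}:=b^{\pm}_{\gamma}$ be the Busemann functions of the rays $\gamma^{\pm}$; each is $1$-Lipschitz on $M$. From $d(\gamma(t),\gamma(-s))=t+s$ and the triangle inequality one gets $b^{+}+b^{-}\ge 0$ on $M$, with equality along $\gamma$. The whole point is to upgrade this to $b^{+}+b^{-}\equiv 0$ on $N$; once this holds, the desired line through each point of $N$ and the Busemann identities follow from the rigidity of the equality case.

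The analytic core is the claim that $\Delta_{f}b^{\pm}\le 0$ on $N$ in the barrier sense. Fix $p\in N$ and let $\sigma$ be an asymptotic ray to $\gamma^{+}$ at $p$, so that by hypothesis $\mathrm{Ric}_{f}\ge 0$ and \eqref{fcond} hold along $\sigma$. For each $s>0$ the function
\[
x\longmapsto d(x,\sigma(s))+b^{+}(\sigma(s))
\]
is an upper barrier for $b^{+}$ at $p$: it agrees with $b^{+}$ at $p$ because an asymptotic ray realizes the Busemann function (so $b^{+}(\sigma(s))=b^{+}(p)-s$ while $d(p,\sigma(s))=s$), and it dominates $b^{+}$ because $b^{+}$ is $1$-Lipschitz. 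Applying the Wei--Wylie weighted Laplacian comparison \cite{[WW]} along $\sigma$ --- more precisely, the form valid under $\mathrm{Ric}_{f}\ge 0$ used in \cite{[FLZ]}, in which the correction produced by the potential is an average of $f$ along $\sigma$ --- one obtains $\Delta_{f}d(\cdot,\sigma(s))(p)\to 0$ as $s\to\infty$; this is exactly the step at which the averaged growth hypothesis \eqref{fcond} on $f$ along $\sigma$ is consumed. Letting $s\to\infty$ exhibits $\Delta_{f}b^{+}\le 0$ at $p$ in the barrier sense, and symmetrically $\Delta_{f}b^{-}\le 0$; since $p$ was an arbitrary point of $N$, $b^{+}+b^{-}$ is $\Delta_{f}$-superharmonic on $N$ in the barrier sense.

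Since $b^{+}+b^{-}\ge 0$ on the connected open set $N$, vanishes on the interior set $\gamma\cap N$, and is $\Delta_{f}$-superharmonic, the strong maximum principle (with Calabi's trick to absorb the barrier-sense inequality) forces $b^{+}+b^{-}\equiv 0$ on $N$. Thus $b^{+}=-b^{-}$ on $N$, so $b^{+}$ and $-b^{+}$ are both $\Delta_{f}$-superharmonic there, whence $b^{+}$ is $\Delta_{f}$-harmonic and, by elliptic regularity, smooth on $N$. Moreover $\abs{\nabla b^{+}}\le 1$ by the Lipschitz bound, while $b^{+}(\sigma(s))=b^{+}(p)-s$ forces $\abs{\nabla b^{+}}(p)\ge 1$; hence $\abs{\nabla b^{+}}\equiv 1$ on $N$. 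Inserting $u=b^{+}$ into the generalized Bochner formula
\[
0=\Delta_{f}\abs{\nabla b^{+}}^{2}=2\abs{\mathrm{Hess}\,b^{+}}^{2}+2g(\nabla b^{+},\nabla\Delta_{f}b^{+})+2\mathrm{Ric}_{f}(\nabla b^{+},\nabla b^{+})
\]
and using $\Delta_{f}b^{+}\equiv 0$ and $\mathrm{Ric}_{f}\ge 0$ on $N$ (which holds because the asymptotic rays in the hypothesis emanate from points of $N$), we conclude $\mathrm{Hess}\,b^{+}\equiv 0$ on $N$.

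Finally, $\nabla b^{+}$ is now a parallel unit field on $N$, so $b^{+}$ is affine and $N$ carries a local product structure whose $\mathbb{R}$-factors are the integral curves of $\nabla b^{+}$; one of them is the line $\gamma$ (traversed so that $b^{+}$ decreases). A continuation argument based on this product structure shows that the integral curve $\alpha$ through $p$ keeps distance $d(p,\gamma)<\delta$ from $\gamma$, hence stays in $N$ and, by completeness, extends to a complete geodesic along which $b^{+}$ changes at unit rate; then $d(\alpha(a),\alpha(b))\le\abs{a-b}$ together with $d(\alpha(a),\alpha(b))\ge\abs{b^{+}(\alpha(a))-b^{+}(\alpha(b))}=\abs{a-b}$ shows $\alpha$ is a line. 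Parametrizing the two halves $\alpha^{\pm}$ of $\alpha$ by arc length from the point of $\alpha$ at which $b^{+}_{\gamma}$ (equivalently $b^{-}_{\gamma}$) vanishes, with $b^{+}_{\gamma}$ increasing along $\alpha^{+}$, and using $b^{+}=-b^{-}$ on $N$, yields $b^{+}_{\gamma}(\alpha^{+}(t))=t$ and $b^{-}_{\gamma}(\alpha^{-}(t))=t$, as required. I expect the one genuine obstacle to be the comparison of the second paragraph: controlling the potential term along the asymptotic rays so that it drops out in the limit $s\to\infty$ --- precisely the Fang--Li--Zhang estimate and the reason \eqref{fcond} is imposed --- whereas the maximum-principle step and the upgrade of the integral curves to honest lines inside $N$ are routine.
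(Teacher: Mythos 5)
Your proof is correct and follows essentially the same route as the paper's own proof, which is only a sketch deferring to Fang--Li--Zhang \cite{[FLZ]} and \cite{[Wu]}: barrier superharmonicity of the Busemann functions via the weighted Laplacian comparison in which \eqref{fcond} kills the averaged potential term, the maximum principle forcing $b_\gamma^++b_\gamma^-\equiv 0$ on $N$, and the Bochner formula yielding $\mathrm{Hess}\,b_\gamma^+\equiv 0$ so that the integral curves of $\nabla b_\gamma^+$ are the desired lines. You in fact supply more of the standard details than the paper does.
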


\begin{proof}[Sketch proof of Lemma \ref{splilem}]
The proof is the same as the argument of Lemma 2.8 in \cite{[Wu]} and we include
it for the completeness. For any point $p\in N$, by \cite{[FLZ]} we firstly prove
that the two asymptotic rays to $\gamma^{\pm}$ are uniquely determined at $p$
and form a line $\gamma_p$. Then we can prove that Busemann functions
$b_\gamma^{\pm}$ at $p$ with $b_\gamma^{+}+b_\gamma^{-}=0$ are smooth $f$-harmonic
functions in the barrier sense. Meanwhile, when $\mathrm{Ric}_f\ge 0$ and
\eqref{fcond} hold on $\gamma_p$, from \cite{[FLZ]} we get that
Busemann functions $b_\gamma^{\pm}$ satisfy $|\nabla b_\gamma^{\pm}|=1$ and
$\mathrm{Hess}\,b_\gamma^{\pm}=0$ on $\gamma_p$. Here the restriction of $b_\gamma^{\pm}$
to $\gamma_p$ is a linear function with derivative 1. So we can reparameterize
$\gamma_p$ and the lemma follows.
\end{proof}

As in the proof of Lemma 3.3 in \cite{[Cai]}, we are able to apply Lemma \ref{splilem} to prove
the following property about ends. We refer the reader to Lemma 3.1 and Proposition
3.2 in \cite{[Wu]} for the detailed discussion.
\begin{lemma}\label{inequa}
Under the same assumptions of Theorem \ref{Main}, $M$ cannot admit a line
$\gamma$ satisfying $d(\gamma(t),B_o(R))\geq |t|+2R$ for all $t$. Moreover,
if $[\gamma_1]$ and $[\gamma_2]$ are two different ends of $M^n$, then for
any $t_1, t_2\ge 0$,
\begin{equation}\label{endineq}
d(\gamma_1(t_1),\gamma_2(t_2))>t_1+t_2-6R.
\end{equation}
\end{lemma}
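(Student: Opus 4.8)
The plan is to follow the structure of Cai's argument (Lemma 3.3 in \cite{[Cai]}) adapted to the weighted setting, leaning on Lemma \ref{splilem} for the local splitting. First I would prove the non-existence of a line $\gamma$ with $d(\gamma(t),B_o(R))\ge |t|+2R$ for all $t$. Suppose such a line exists. Then the entire line $\gamma$, together with a $\delta$-tubular neighborhood $N$ of it for $\delta$ small (say $\delta<R$), stays outside $B_o(R)$; in fact every point of $N$ has distance at least $R$ from $B_o(R)$, and more importantly, the minimal geodesics realizing the asymptotic rays to $\gamma^\pm$ from points of $N$ also remain outside $B_o(R)$ by a triangle-inequality estimate, using the $+2R$ slack in the hypothesis. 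Hence $\mathrm{Ric}_f\ge 0$ and the degeneration condition \eqref{fcond} hold along all those asymptotic rays, so Lemma \ref{splilem} applies: through every point of $N$ there is a line $\alpha$ splitting off the Busemann function. This forces $N$ to be isometric to a product $N'\times\mathbb{R}$ (with a warped weighted structure), which propagates the line structure; iterating, one shows the splitting extends until it meets $B_o(R)$, contradicting the requirement $\mathrm{Ric}_f\ge-(n-1)K$ (a genuine line through a region of possibly negative Bakry--\'Emery Ricci forces a contradiction via the Bochner formula / the strict inequality in the comparison), exactly as in \cite{[Cai], [Wu]}.

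Second, for the inequality \eqref{endineq}, I would argue by contradiction: suppose $[\gamma_1]\ne[\gamma_2]$ are distinct ends but $d(\gamma_1(t_1),\gamma_2(t_2))\le t_1+t_2-6R$ for some $t_1,t_2\ge 0$. The idea is to use this to manufacture a line of the forbidden type. Concretely, for $t_1,t_2$ large, concatenate a minimal geodesic from $\gamma_1(t_1)$ to $\gamma_2(t_2)$ with the tails of $\gamma_1$ and $\gamma_2$; the defect hypothesis guarantees that this broken path is close to minimizing, and a limiting argument (letting $t_1,t_2\to\infty$ along subsequences) produces a line $\gamma$. Because $\gamma_1$ and $\gamma_2$ represent different ends, for large parameter the points $\gamma_i(t)$ lie far from $B_o(R)$ and in different connected components of $M\setminus B_o(R)$, so the connecting geodesic — being nearly a straight concatenation — must itself pass near $B_o(R)$ only in a controlled way; tracking the constants (this is where the precise $6R$ enters), one checks the resulting line $\gamma$ satisfies $d(\gamma(t),B_o(R))\ge |t|+2R$. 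That contradicts the first part of the lemma, proving the inequality.

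The main obstacle, and the step deserving the most care, is the bookkeeping of constants in the second part: one must verify that the ``$-6R$'' slack is exactly what is needed so that the line extracted from the two ends clears $B_o(R)$ by the margin ``$+2R$'' required in the first part. This requires combining the triangle inequality along the broken geodesic with the fact that $\gamma_1,\gamma_2$ eventually leave every $B_o(\rho)$, and estimating how far the midpoint region of the connecting geodesic can dip toward $o$. A secondary technical point is ensuring the limiting geodesic is genuinely a line (no loss of minimality in the limit), which is standard but uses completeness and the uniform near-minimality coming from the defect bound. Since the paper states this follows ``as in the proof of Lemma 3.3 in \cite{[Cai]}'' and Lemma 3.1 and Proposition 3.2 in \cite{[Wu]}, I would present the argument at the level of indicating these reductions and the constant tracking, referring to those sources for the routine parts of the limiting construction.
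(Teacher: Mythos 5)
Your overall architecture is the one the paper intends (the paper itself gives no proof of this lemma, deferring entirely to Lemma 3.3 of \cite{[Cai]} and Lemma 3.1 and Proposition 3.2 of \cite{[Wu]}): prove part one by localizing the splitting via Lemma \ref{splilem}, and deduce \eqref{endineq} by converting a violation of it into a line forbidden by part one. Your instinct about the constant bookkeeping is also correct: if $d(\gamma_1(t_1),\gamma_2(t_2))\le t_1+t_2-6R$, the same holds for all larger parameters by the triangle inequality, and for a point $q$ at arclength $v$ from $\gamma_1(s_1)$ on a minimal geodesic $\sigma_{s_1,s_2}$ joining $\gamma_1(s_1)$ to $\gamma_2(s_2)$ one gets $d(q,o)\ge\max\{s_1-v,\ v-s_1+6R\}=|w|+3R$ with $w=v-(s_1-3R)$, hence $d(\sigma(w),B_o(R))\ge|w|+2R$; the limit line inherits exactly the forbidden property.

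There are, however, two genuine gaps. First, your proposed contradiction in part one --- that ``a genuine line through a region of possibly negative Bakry--\'Emery Ricci forces a contradiction via the Bochner formula'' --- is not a valid mechanism: a line can perfectly well traverse a region where only $\mathrm{Ric}_f\ge-(n-1)K$ holds (hyperbolic space is full of lines), and the Bochner identity with a negative lower bound gives only $|\mathrm{Hess}\,b|^2\le(n-1)K$, no contradiction. Nor does the splitting ``extend until it meets $B_o(R)$'': applying Lemma \ref{splilem} at a point $p$ requires the asymptotic rays from $p$ to avoid $B_o(R)$, which consumes the margin $2R$ at a definite rate as one moves transversally to $\gamma$, so the propagation exhausts itself while still a distance comparable to $R$ from $B_o(R)$. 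The contradiction must instead be extracted from the split structure of the tubular neighborhood itself, as in \cite{[Cai]} and \cite{[Wu]}; that step cannot be replaced by the mechanism you describe and has to be imported faithfully. Second, in part two you assert that for large parameters $\gamma_1(t)$ and $\gamma_2(t)$ lie in different components of $M\setminus B_o(R)$. This is false in general --- distinct ends are separated only by \emph{some} compact set, possibly much larger than $B_o(R)$ --- and if it were true the lemma would be trivial: every curve joining the two points would cross $\overline{B_o(R)}$, giving $d(\gamma_1(t_1),\gamma_2(t_2))\ge t_1+t_2-2R$ with no curvature hypothesis whatsoever, and the whole splitting apparatus would be unnecessary. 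The correct argument uses the larger separating compact set only to force the geodesics $\sigma_{s_1,s_2}$ (which by the estimate above stay at distance $\ge 2R$ from $B_o(R)$, so do \emph{not} meet it) to pass through a fixed compact region; that is what supplies the compactness needed to extract the limit line, which then contradicts part one.
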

\begin{remark}
In Proposition 3.2 in \cite{[Wu]}, we only proved \eqref{endineq}
when $t_1,t_2\ge 3R$. Checking the previous proof, we easily
see that \eqref{endineq} in fact holds for all $t_1, t_2\ge 0$.
\end{remark}

On the other hand, recall that Munteanu and Wang \cite{[MuW]} proved another Cheeger-Gromoll
type splitting theorem when $\mathrm{Ric}_f\ge\frac 12$ and $f$ has certain quadratic
growth. As in the preceding argument, we easily get the following result by
analyzing the Munteanu-Wang's proof locally.
\begin{lemma}\label{splilem2}
Let $N$ be the $\delta$-tubular neighborhood of a line $\gamma$ on
$(M,g,e^{-f}dv)$. Suppose that from every point $p$ in $N$, there
are asymptotic rays to $\gamma^{\pm}$ such that $\mathrm{Ric}_f\ge\frac 12$ and
$f(x)\le\frac 14 d^2(x,B_o(1))+c$ for some constant $c>0$ on both asymptotic
rays. Then through every point in $N$, there exists a line $\alpha$
such that
\[
b_\gamma^+(\alpha^+(t))=t,\quad b_\gamma^-(\alpha^-(t))=t.
\]
\end{lemma}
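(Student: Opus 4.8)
The plan is to follow the proof of the splitting part of Lemma~\ref{splilem}, with the only substantive difference being that the analytic input for the Busemann functions is now taken from the Munteanu--Wang work~\cite{[MuW]} rather than from Fang--Li--Zhang~\cite{[FLZ]}. Concretely, fix a point $p\in N$ and let $\alpha^{\pm}$ be asymptotic rays to $\gamma^{\pm}$ at $p$. First I would record the standard facts that $b_\gamma^{+}+b_\gamma^{-}\ge 0$ on all of $M$ (triangle inequality) and $b_\gamma^{+}+b_\gamma^{-}=0$ along $\gamma$ itself; since $N$ is a $\delta$-tubular neighborhood of $\gamma$ and every point of $N$ is within $\delta$ of $\gamma$, one gets $b_\gamma^{+}+b_\gamma^{-}\le 2\delta$ on $N$, so after possibly shrinking $N$ we may treat $b_\gamma^{+}+b_\gamma^{-}$ as small near $p$.

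The core step is to show $\Delta_f b_\gamma^{\pm}\le 0$ at $p$ in the barrier sense. Here I would invoke the Munteanu--Wang estimate: under $\mathrm{Ric}_f\ge\tfrac12$ together with the quadratic bound $f(x)\le\tfrac14 d^2(x,B_o(1))+c$ on the asymptotic rays, their argument yields an $f$-Laplacian comparison for the distance functions $r_t(\cdot)=d(\cdot,\gamma(t))$ whose error terms vanish as $t\to\infty$; the barrier $b_{p,\epsilon}^{+}(x):=r_t(x)-t$ for large $t$ then gives $\Delta_f b_\gamma^{+}\le \epsilon$ at $p$, and symmetrically for $b_\gamma^{-}$. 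Adding, $\Delta_f(b_\gamma^{+}+b_\gamma^{-})\le 0$ in the barrier sense, while $b_\gamma^{+}+b_\gamma^{-}\ge 0$ with equality at interior points of $\gamma\cap N$; the strong maximum principle for $\Delta_f$ (which holds since $\Delta_f$ is a uniformly elliptic operator with smooth coefficients) forces $b_\gamma^{+}+b_\gamma^{-}\equiv 0$ on the connected neighborhood, hence $b_\gamma^{+}=-b_\gamma^{-}$ is $f$-harmonic there and, by elliptic regularity, smooth.

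Once smoothness and $f$-harmonicity of $b:=b_\gamma^{+}$ are in hand, I would run the Bochner argument: the generalized Bochner formula quoted in the introduction gives
\[
\Delta_f|\nabla b|^2=2|\mathrm{Hess}\,b|^2+2\,\mathrm{Ric}_f(\nabla b,\nabla b)\ge |\mathrm{Hess}\,b|^2+|\nabla b|^2
\]
using $\mathrm{Ric}_f\ge\tfrac12$ on the relevant rays; combined with $|\nabla b|\le 1$ (Lipschitz bound) and $|\nabla b|=1$ along $\gamma$, the maximum principle again yields $|\nabla b|\equiv 1$ and then $\mathrm{Hess}\,b\equiv 0$ on the neighborhood, exactly as in the classical Cheeger--Gromoll/Wei--Wylie scheme. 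Since $\mathrm{Hess}\,b=0$, the integral curves of $\nabla b$ are geodesics, and the level-set flow produces through each point of $N$ a line $\alpha$ with $\nabla b$ as unit tangent; the normalization $b(\alpha^{+}(t))=t$, $b(\alpha^{-}(t))=t$ follows because $b$ restricted to such a geodesic is affine with slope $1$, after reparameterizing. Finally one identifies $b=b_\gamma^{+}$ with $b_{\alpha}^{+}$ along $\alpha$, which gives the stated conclusion $b_\gamma^{+}(\alpha^{+}(t))=t$ and $b_\gamma^{-}(\alpha^{-}(t))=t$.

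The main obstacle is purely a matter of citation bookkeeping rather than new mathematics: one must check that the Munteanu--Wang splitting argument, which in~\cite{[MuW]} is carried out globally under $\mathrm{Ric}_f\ge\tfrac12$ and a global quadratic bound on $f$, localizes to the tubular neighborhood $N$ when the hypotheses are only assumed along the asymptotic rays emanating from points of $N$. This is the same localization philosophy already used to deduce Lemma~\ref{splilem} from~\cite{[FLZ]} (and Lemma~\ref{inequa} from~\cite{[Cai]}), so I would simply indicate that the Munteanu--Wang estimates on the distance functions $d(\cdot,\gamma(t))$ depend only on the curvature and $f$ along the minimizing geodesics realizing these distances — which converge to the asymptotic rays — and therefore the barrier inequalities above are valid at each $p\in N$ with the stated local hypotheses. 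The rest (maximum principle, Bochner, construction of the line) is verbatim the argument behind Lemma~\ref{splilem}, so I would present this proof in sketch form, flagging only the point at which~\cite{[MuW]} replaces~\cite{[FLZ]}.
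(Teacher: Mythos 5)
Your overall architecture (localize the relevant splitting theorem to the tubular neighborhood, compare $b_\gamma^++b_\gamma^-$ with $0$ by a maximum principle, then run Bochner to get $|\nabla b|\equiv 1$ and $\mathrm{Hess}\,b\equiv 0$) matches what the paper intends — the paper in fact gives no proof beyond the sentence ``by analyzing the Munteanu--Wang's proof locally,'' so supplying details is the right instinct. However, the details you supply are those of the $\mathrm{Ric}_f\ge 0$ case (Lemma \ref{splilem}), and they break down under the hypothesis $\mathrm{Ric}_f\ge\frac12$.

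The gap is in the core analytic step. You claim the Munteanu--Wang comparison gives $\Delta_f b_\gamma^{\pm}\le\epsilon$, hence that $b:=b_\gamma^+$ is $f$-harmonic, and then you run Bochner dropping the term $2g(\nabla b,\nabla\Delta_f b)$. But a function with $\Delta_f b=0$ and $|\nabla b|\equiv 1$ cannot exist where $\mathrm{Ric}_f\ge\frac12$: the Bochner formula would give $0=\Delta_f|\nabla b|^2=2|\mathrm{Hess}\,b|^2+2\,\mathrm{Ric}_f(\nabla b,\nabla b)\ge 1$. Indeed your own displayed inequality $\Delta_f|\nabla b|^2\ge|\mathrm{Hess}\,b|^2+|\nabla b|^2$ is self-contradictory once you conclude $|\nabla b|\equiv1$ (left side $0$, right side $\ge1$). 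The false input is the barrier estimate: under $\mathrm{Ric}_f\ge\frac12$ with $f\le\frac14 d^2+c$, the weighted Laplacian of $r_t(\cdot)=d(\cdot,\gamma(t))$ does \emph{not} have error terms vanishing as $t\to\infty$; the Munteanu--Wang comparison is of the shrinker type, roughly $\Delta_f b\le-\frac12 b+c'$. (On the model $(\mathbb{R}^n,\delta_{ij},e^{-|x|^2/4}dv)$ one has $b_\gamma^+=-x_1$ and $\Delta_f b_\gamma^+=\frac{x_1}{2}=-\frac12 b_\gamma^+$, which is strictly positive wherever $b_\gamma^+<0$, so ``$\Delta_f b_\gamma^+\le\epsilon$'' fails.) The correct scheme is: show $\Delta_f(b^++b^-)\le-\frac12(b^++b^-)$ in the barrier sense, apply the strong maximum principle to the operator $\Delta_f+\frac12$ at the interior zero minimum of the nonnegative function $b^++b^-$ (legitimate because the minimum value is exactly $0$), conclude $b^++b^-\equiv0$ near $\gamma$ and that $b$ satisfies $\Delta_f b=-\frac12 b$ there; only then does Bochner close up, since $2g(\nabla b,\nabla\Delta_f b)=-|\nabla b|^2$ exactly cancels $2\,\mathrm{Ric}_f(\nabla b,\nabla b)\ge|\nabla b|^2$, leaving $\Delta_f|\nabla b|^2\ge2|\mathrm{Hess}\,b|^2\ge0$ and permitting the maximum principle at the interior maximum $|\nabla b|=1$ along $\gamma$. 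Your localization remarks in the last paragraph are fine; it is the transplanted $\mathrm{Ric}_f\ge0$ analysis that must be replaced by the genuinely different Munteanu--Wang mechanism.
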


Using Lemma \ref{splilem2}, we can get a geometric inequality about two
different ends along the above similar argument.
\begin{lemma}\label{inequa2}
Under the same assumptions of Theorem \ref{Main2}, $M$ cannot admit a line
$\gamma$ satisfying $d(\gamma(t),B_o(R))\geq |t|+2R$ for all $t$. Moreover,
if $[\gamma_1]$ and $[\gamma_2]$ are two different ends of $M^n$, then for
any $t_1, t_2\ge 0$,
\[
d(\gamma_1(t_1),\gamma_2(t_2))>t_1+t_2-6R.
\]
\end{lemma}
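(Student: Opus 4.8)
The plan is to follow the proof of Lemma~\ref{inequa} — which itself mimics Lemma~3.3 of \cite{[Cai]} and Lemma~3.1 / Proposition~3.2 of \cite{[Wu]} — almost verbatim, simply replacing the local splitting of Lemma~\ref{splilem} (valid under the hypotheses of Theorem~\ref{Main}) by that of Lemma~\ref{splilem2} (valid under the hypotheses of Theorem~\ref{Main2}). There are two assertions to prove: (a) $M$ admits no line $\gamma$ with $d(\gamma(t),B_o(R))\ge|t|+2R$ for all $t$; and (b) the separation estimate $d(\gamma_1(t_1),\gamma_2(t_2))>t_1+t_2-6R$ for rays $\gamma_1,\gamma_2$ going to distinct ends. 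The substantive point is (a); (b) will then follow from elementary distance estimates.

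For (a), I would argue by contradiction: assume such a line $\gamma$ exists. Since $\gamma(s)\notin B_o(R)$ and $d(\gamma(s),o)=d(\gamma(s),B_o(R))+R\ge|s|+3R$, a $\delta$-tubular neighbourhood $N$ of $\gamma$ with $\delta<R$ is disjoint from $\overline{B_o(R)}$. The $+2R$ slack in the escaping hypothesis is exactly what is needed to run the triangle-inequality bookkeeping of \cite{[Cai],[Wu]} and conclude that, from every $p\in N$, the asymptotic rays to $\gamma^{\pm}$ stay outside $B_o(R)$; hence $\mathrm{Ric}_f\ge\frac12$ holds along them. Moreover, at any $x$ with $d(x,o)>R$ the hypothesis $f(x)\le\frac14 d^2(x,B_o(R))+c$ of Theorem~\ref{Main2} gives, since $R<1$ forces $d(x,B_o(R))\le d(x,B_o(1))+1$, the bound $f(x)\le\frac14\big(d(x,B_o(1))+1\big)^2+c$, which is the quadratic growth demanded by Lemma~\ref{splilem2} (the additional linear term is lower order and harmless; it is covered by \cite{[MuW]}). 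Therefore Lemma~\ref{splilem2} applies on $N$: through every $p\in N$ there is a line $\alpha_p$ with $b^{+}_{\gamma}(\alpha^{+}_p(t))=t$ and $b^{-}_{\gamma}(\alpha^{-}_p(t))=t$.

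As in the sketch proof of Lemma~\ref{splilem2}, this forces $b^{\pm}_{\gamma}$ to be smooth on $N$ with $|\nabla b^{\pm}_{\gamma}|\equiv1$, $\mathrm{Hess}\,b^{\pm}_{\gamma}\equiv0$ and $b^{+}_{\gamma}+b^{-}_{\gamma}\equiv0$, so that a neighbourhood of $\gamma$ in $M$ is a Riemannian product $\Omega\times\mathbb{R}$ with $\gamma$ a factor line, lying outside $\overline{B_o(R)}$. I would then take a minimal geodesic $\sigma$ from $o$ to the point of $\gamma$ nearest $o$, reparametrized so that this point is $\gamma(0)$; by first variation $\sigma$ meets $\gamma$ orthogonally at $\gamma(0)$, so, $\nabla b^{+}_{\gamma}(\gamma(0))$ being a unit vector along $\gamma$, the function $b^{+}_{\gamma}\circ\sigma$ — affine, because $\sigma$ is geodesic and $\mathrm{Hess}\,b^{+}_{\gamma}\equiv0$ on $N$ — has vanishing derivative at the $\gamma(0)$-end, hence is identically $0$ along the portion of $\sigma$ in $N$. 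Thus $\sigma$ runs inside the totally geodesic slice $\Omega\times\{0\}$ until it exits $N$; since that slice cannot reach $o\notin N$ while $\sigma$ minimizes to $\gamma(0)$, one obtains a contradiction exactly as in \cite[Lemma~3.3]{[Cai]}. This proves (a).

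For (b), let $\gamma_1,\gamma_2$ be rays from $o$ representing distinct ends (understood, as in the theorems, with respect to $B_o(R)$). Suppose to the contrary that $d(\gamma_1(t_1),\gamma_2(t_2))\le t_1+t_2-6R$ for some $t_1,t_2\ge0$; by the triangle inequality along each $\gamma_i$ this persists for all larger parameters, so we may take $t_1=t_2=t$ with $t$ large, and then $\gamma_1(t),\gamma_2(t)$ lie in different components of $M\setminus\overline{B_o(R)}$, so any minimal geodesic between them contains a point $q$ with $d(q,o)\le R$; hence $d(\gamma_1(t),\gamma_2(t))=d(\gamma_1(t),q)+d(q,\gamma_2(t))\ge 2t-2R$, contradicting $\le 2t-6R$. (When $\min(t_1,t_2)<3R$ the trivial bound $d(\gamma_1(t_1),\gamma_2(t_2))\ge|t_1-t_2|$ already exceeds $t_1+t_2-6R$, so the inequality holds for all $t_1,t_2\ge0$; alternatively (b) follows from (a) by the limiting argument of \cite[Lemma~3.3]{[Cai]}, \cite[Prop.~3.2]{[Wu]}.) The main obstacle, and the only place where a computation must genuinely be reproduced rather than cited, is inside (a): verifying that the asymptotic rays from points of the thin tube $N$ really remain outside $B_o(R)$ — this is where the precise constant $2R$ is consumed and where the delicate triangle-inequality estimates of \cite{[Cai],[Wu]} enter — together with the bookkeeping matching the quadratic bound on $f$ in Theorem~\ref{Main2} to the form required by the Munteanu–Wang local splitting in Lemma~\ref{splilem2}. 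The final contradiction from the product structure and the tracking of the constant $6R$ are then routine.
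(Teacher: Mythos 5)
Your overall strategy --- run the Cai/Wu argument with Lemma~\ref{splilem2} in place of Lemma~\ref{splilem} --- is exactly what the paper intends: it gives no independent proof of this lemma and simply points to ``the above similar argument'', i.e.\ to Lemma~\ref{inequa}, to Lemma~3.3 of \cite{[Cai]} and to Proposition~3.2 of \cite{[Wu]}. Your part (a) is at the same level of detail as the paper and is essentially right, modulo one bookkeeping point: passing from the hypothesis $f\le\frac14 d^2(\cdot,B_o(R))+c$ of Theorem~\ref{Main2} to the form $f\le\frac14 d^2(\cdot,B_o(1))+c$ literally demanded by Lemma~\ref{splilem2} leaves an extra term $\frac12 d(\cdot,B_o(1))+\frac14$ that is not absorbed into the constant $c$; you are probably right that the Munteanu--Wang splitting tolerates such lower-order terms, but that claim has to be checked against \cite{[MuW]} directly rather than against Lemma~\ref{splilem2} as stated.

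The genuine gap is in your primary argument for (b). You read ``two different ends'' as ``eventually in different components of $M\setminus \overline{B_o(R)}$''. Under that reading the inequality is a two-line triangle-inequality fact (you even get the stronger bound $t_1+t_2-2R$) that uses none of the hypotheses --- but it is also not the statement the proof of Theorem~\ref{Main2} needs. There, three different ends means three pairwise non-cofinal rays, i.e.\ rays that separate with respect to \emph{some} compact set, possibly a ball of enormous radius; all three may perfectly well remain in a single component of $M\setminus B_o(R)$ for all time. For such ends a minimal geodesic from $\gamma_1(t)$ to $\gamma_2(t)$ need not come anywhere near $B_o(R)$, and your argument gives nothing. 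The correct derivation of (b) is the dichotomy of \cite{[Cai]} and \cite{[Wu]}: either infinitely many of the minimal segments from $\gamma_1(t_1)$ to $\gamma_2(t_2)$ meet a fixed ball $B_o(3R)$, which yields the bound $t_1+t_2-6R$ directly, or eventually they all avoid it, in which case a subsequence converges to a line $\gamma$ with $d(\gamma(t),B_o(R))\ge|t|+2R$, contradicting (a). You mention this route only parenthetically as an ``alternative''; it is in fact the proof, and the trivial argument you lead with cannot replace it.
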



\section{Gap theorems} \label{sec3}
In this section we will prove Theorems \ref{Main} and \ref{Main2}.
We start with an important proposition, which will be used in our proof of theorems.
For the convenient discussion, we assume that $(M,g,e^{-f}dv)$ has
$\mathrm{Ric}_f\ge-(n-1)$ by scaling the metric.
\begin{proposition}\label{pro}
Let $(M,g,e^{-f}dv)$ be an $n$-dimensional complete smooth metric measure
space with a base point $o\in M$. Assume that $\mathrm{Ric}_f\ge-(n-1)$ and
let $A:=\sup_{x\in B_o(1)}|f(x)|$. There exist an $\epsilon=\epsilon(n,A)$
and a $\delta=\delta(n,A)$ such that
\[
u(x)<2-2\delta-12\epsilon
\]
for all $x\in S_o(1-\delta):=\{x\in M|d(x,o)=1-\delta\}$ if $u:M^n\to \mathbb{R}$
is a continuous function satisfying the following properties:
\begin{itemize}
\item[(i)] $u(o)=0$,
\item[(ii)] $u\ge-6\epsilon$,
\item[(iii)] $\sup_{x\neq y}|u(x)-u(y)|/d(x,y)\le 2$,
\item[(iv)] $\Delta_f u\le 2(n+4A-1)$ in the barrier sense.
\end{itemize}
\end{proposition}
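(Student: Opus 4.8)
The plan is to compare $u$ with an explicit radial barrier function. Since $u$ is a subsolution of $\Delta_f u\le 2(n+4A-1)$ and we have the Wei–Wylie comparison $\Delta_f r\le (n+4A-1)\coth r$ along minimal geodesics from $o$ (Lemma \ref{comp} with $K=1$), the natural candidate is a function $G(r)$ chosen so that $\Delta_f(G\circ r)\ge 2(n+4A-1)$ everywhere (in the barrier sense) and so that $G$ dominates the crude linear bound $u\le 2r$ near $r=0$ but falls strictly below $2$ well before $r=1$. Concretely, one sets $G(r)=2r - \int_0^r \psi(s)\,ds$ for a suitable nonnegative $\psi$, or more simply picks $G$ solving the ODE $G''(r) + (n+4A-1)\coth(r)\,G'(r) = 2(n+4A-1)$ with $G(0)=0$, $G'(0)=2$; then for any $C^2$ function $v$ with $\Delta_f v\le 2(n+4A-1)$ and $v\le G\circ r$ on a sphere $S_o(\rho)$, the maximum principle for $\Delta_f$ gives $v\le G\circ r$ on $B_o(\rho)$. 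The point is that $G(r)<2r$ for $r>0$ (because $G'$ is strictly decreasing from $2$), and a compactness/continuity argument in the parameter $(n,A)$ produces $\delta=\delta(n,A)$ with $G(1-\delta)<2-2\delta-12\epsilon$ once $\epsilon=\epsilon(n,A)$ is also chosen small.

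The first step is to make this precise at the level of the ODE: show the solution $G$ exists on $[0,1]$, is smooth, satisfies $G(0)=0$, $G'(0)=2$, $G'$ strictly decreasing, hence $0<G(r)<2r$ on $(0,1]$, and record the quantitative gap $2\rho-G(\rho)\ge \eta(n,A,\rho)>0$ for each fixed $\rho\in(0,1)$. The second step is the comparison: by property (iv), $u$ is a $\Delta_f$-subsolution in the barrier sense with the \emph{same} right-hand side $2(n+4A-1)$ that $G\circ r$ is a supersolution for (here one uses $\coth(\sqrt{K}r)\ge 1$ so that the Laplacian comparison feeds through, and the barrier formulation lets us run the standard Calabi argument at non-smooth points of $r$). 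Since $u(o)=0=G(0)$, the barrier maximum principle yields $u(x)\le G(d(x,o))$ for all $x\in B_o(1)$; in particular on $S_o(1-\delta)$ we get $u\le G(1-\delta)$. The third step is just bookkeeping: choose $\delta=\delta(n,A)$ so that $G(1-\delta)\le 2(1-\delta)-\eta_0$ for a fixed $\eta_0=\eta_0(n,A)>0$, then choose $\epsilon=\epsilon(n,A)\le \eta_0/12$, giving
\[
u(x)\le G(1-\delta)\le 2-2\delta-\eta_0\le 2-2\delta-12\epsilon
\]
on $S_o(1-\delta)$, and the inequality is strict because $G(1-\delta)<2(1-\delta)$ strictly. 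Properties (i), (ii), (iii) are used only to guarantee $u$ is finite and continuous and to anchor $u(o)=0$; the strict inequality in the conclusion comes from the strict monotonicity of $G'$.

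The main obstacle is the barrier maximum principle for $\Delta_f$ at points where the distance function $r$ is not smooth (cut locus) and where $u$ itself is only continuous. The standard resolution, which I would follow, is the Calabi trick: replace $r$ by the length of a broken geodesic through a slightly perturbed basepoint to obtain an upper barrier for $r$ that is smooth near the point in question, note that the Wei–Wylie estimate of Lemma \ref{comp} holds for this upper barrier as well, and combine it with a lower barrier for $u$ furnished by property (iv). One then applies the classical comparison theorem for continuous sub/supersolutions in the barrier sense (as in Cheeger–Gromoll, or Calabi's original argument) to $u - G\circ r$ on $B_o(1-\delta)$; its maximum cannot be attained in the interior without violating $\Delta_f(u-G\circ r)\le 0$ in the barrier sense there, so it is attained on $S_o(1-\delta)$ or at $o$, and at $o$ it equals $0$. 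A secondary technical point is uniformity of $\delta,\epsilon$ in $A$: since $A$ enters the ODE only through the coefficient $n+4A-1$, and the gap $2r-G(r)$ depends continuously and monotonically on that coefficient, one gets honest dependence $\delta(n,A),\epsilon(n,A)$ rather than merely pointwise existence.
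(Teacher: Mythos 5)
Your approach has a genuine gap; in fact two independent ones, and the symptom that reveals them is that your argument makes no essential use of hypotheses (ii) and (iii). First, the sign of $G'$ is wrong for your comparison. With $G(0)=0$, $G'(0)=2$ and $G'\ge 0$, the Wei--Wylie estimate $\Delta_f r\le (n+4A-1)\coth r$ gives $\Delta_f(G\circ r)=G''+G'\,\Delta_f r\le G''+G'(n+4A-1)\coth r=2(n+4A-1)$, i.e.\ $G\circ r$ is a \emph{subsolution} of the same one-sided inequality that $u$ satisfies by (iv); there is no maximum principle comparing two subsolutions. To get a supersolution $\Delta_f(G\circ r)\ge 2(n+4A-1)$ out of the Laplacian comparison you need $G'\le 0$, which is incompatible with $G(0)=0$, $G'(0)=2$. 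Second, even granting a comparison, the maximum principle controls \emph{interior} values by \emph{boundary} values, whereas you want to bound $u$ on $S_o(1-\delta)$, which is the boundary of your region; your step ``the maximum is attained on $S_o(1-\delta)$ or at $o$, and at $o$ it equals $0$'' treats the interior point $o$ as if it were part of the boundary, and knowing $u(o)=0$ at one interior point gives no control of boundary values. That the scheme cannot work is confirmed by a counterexample to the statement with (ii) dropped: on $\mathbb{R}^n$ with $f\equiv 0$ the function $u(y)=2y_1$ satisfies (i), (iii), (iv) and equals $2-2\delta$ at $x=(1-\delta)e_1\in S_o(1-\delta)$, violating the conclusion; so any proof must use the lower bound (ii) in an essential way.

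The paper's proof inverts your picture: it centers the comparison function at the point $x\in S_o(1-\delta)$ to be estimated, not at $o$. It takes $G(r)=2(n+4A-1)\int_r^1\int_t^1(\sinh s/\sinh t)^{n+4A-1}\,ds\,dt$, which has $G(1)=G'(1)=0$ and $G'\le 0$, so that $\Delta_f\bigl(G(d(x,\cdot))\bigr)\ge 2(n+4A-1)$ and $v:=u-G(d(x,\cdot))$ is $f$-superharmonic on the annulus $B_x(1)\setminus B_x(c)$. The point $o$ lies inside this annulus and $v(o)=-G(1-\delta)$ is forced below $-6\epsilon$, so the boundary minimum of $v$ is below $-6\epsilon$; hypothesis (ii) rules out the outer sphere $S_x(1)$ (where $v=u\ge-6\epsilon$ since $G(1)=0$), so the minimum point $z$ lies on $S_x(c)$, and then the Lipschitz bound (iii) transports the estimate back to $x$: $u(x)\le u(z)+2c=v(z)+H(c)<2-2\delta-12\epsilon$ with $H(c):=2c+G(c)<2$. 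I would suggest reworking your write-up around this annulus/minimum-principle structure rather than a global radial barrier from $o$.
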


\begin{proof}[Proof of Proposition \ref{pro}]
Let $H(r):=2r+G(r)$, where $G(r)$ is defined as
\[
G(r):=2(n+4A-1)\int^1_r\int^1_t\left(\frac{\sinh s}{\sinh t}\right)^{n+4A-1} dsdt
\]
and $A:=\sup_{x\in B_o(1)}|f(x)|$. We remark that in gereral $G(d(x,o))$ does not
satisfy the $f$-Laplacian equation $\Delta_fG=2(n+4A-1)$. But we observe that $G(1)=0$,
\[
G'(r)=-2(n+4A-1)\int^1_r\left(\frac{\sinh s}{\sinh r}\right)^{n+4A-1} ds,
\]
$G'(1)=0$ and $G'(r)\le 0$. So $H(1)=2$ and $H'(r)>0$ when $r\to 1$. Therefore
there exists a real constant $c$ such that $c\in(0,1)$ and $H(c)<2$. Now we choose
$\delta=\delta(n,A)$ and $\epsilon=\epsilon(n,A)$ such that
\begin{equation}\label{cond1}
0<\delta<\tfrac 12\min\left\{2-H(c), 1-c\right\}
\end{equation}
and
\begin{equation}\label{cond2}
0<\epsilon<\tfrac 16\min\left\{G(1-\delta), 2-H(c)-2\delta\right\}.
\end{equation}

Consider function $v(y):=u(y)-G(d(x,y))$ on the annulus $B_x(1)\setminus B_x(c)$.
By the weighted Laplacian comparison (Lemma \ref{comp}) and $G'(r)\le0$, we compute that
\begin{equation*}
\begin{aligned}
\Delta_fG&=G''(r)|\nabla r|^2+G'(r)\Delta_fr\\
&\ge G''(r)+G'(r)\left[(n+4A-1)\coth r\right]\\
&=2(n+4A-1),
\end{aligned}
\end{equation*}
where we used the definition of $G(r)$ in the last equality. This implies that
\[
\Delta_fv\le 0
\]
in the barrier sense by combining the assumption (iv). By the maximum principle,
$v$ achieves its minimum on the boundary of the annulus $B_x(1)\setminus B_x(c)$.
By \eqref{cond1}, we know that $o$ is an interior point of the domain
$B_x(1)\setminus B_x(c)$. By \eqref{cond2} and the assumption (i), we see that
\[
v(o)=u(o)-G(d(o,x))=-G(1-\delta)<-6\epsilon.
\]
Therefore
there exists a point $z$ on the boundary of the annulus such that
\[
v(z)\le v(o)<-6\epsilon.
\]
On the other hand, on the sphere $S_x(1)$, by the assumption (ii), we get that
\[
v=u-G(1)=u\ge-6\epsilon,
\]
where we used $G(1)=0$. This implies that $z\in S_x(c)$. Combining this with
the assumption (iii), the definitions of $H(r)$ and $v$, and \eqref{cond2}, we
finally get
\[
u(x)\le u(z)+2c=v(z)+H(c)<2-2\delta-12\epsilon
\]
and the result follows.
\end{proof}

We now apply Proposition \ref{pro} to prove Theorem \ref{Main}, by
following the argument of Cai-Colding-Yang \cite{[CCY]}.
\begin{proof}[Proof of Theorem \ref{Main}]
When $K=0$, the theorem easily follows by the Fang-Li-Zhang splitting theorem
\cite{[FLZ]}. Now let $(M,g,e^{-f}dv)$ be as Theorem \ref{Main} with $K=1$. Let
$\epsilon=\epsilon(n,A)$ be as Proposition \ref{pro}. We only need to show that
when $R\le\epsilon (n,A)$, $M^n$ has at most two ends. Suppose the conclusion is
not true. That is, there exists three different ends, denoted by $[\gamma_1]$,
$[\gamma_2]$ and $[\gamma_3]$. We consider the function
\[
u(x):=b_{\gamma_1}(x)+b_{\gamma_2}(x).
\]
We claim that $u(x)$ satisfies four conditions of Proposition \ref{pro}.
Indeed, (i) and (iii) are obvious. By \eqref{endineq} and the triangle inequality,
\begin{equation*}
\begin{aligned}
u(x):&=b_{\gamma_1}(x)+b_{\gamma_2}(x)\\
&=\lim_{t\to \infty}(d(x,\gamma_1(t))-t)+\lim_{t\to \infty}(d(x,\gamma_2(t))-t)\\
&\ge \lim_{t\to \infty}\left(d(\gamma_1(t),\gamma_2(t))-2t\right)\\
&\ge-6R\\
&\ge-6\epsilon,
\end{aligned}
\end{equation*}
which implies that $u$ satisfies (ii). Moreover, by Lemma \ref{comp},
\begin{equation*}
\begin{aligned}
\Delta_f\,u(x)&=\Delta_f\,d(x,\gamma_1(\infty))+\Delta_f\,d(x,\gamma_2(\infty))\\
&\le2(n+4A-1)\lim_{r\to \infty}\coth r\\
&= 2(n+4A-1),
\end{aligned}
\end{equation*}
which implies that $u$ satisfies (iv). Therefore, by Proposition \ref{pro}, we
conclude that
\begin{equation}\label{proineq}
u(\gamma_3(1-\delta))<2-2\delta-12\epsilon.
\end{equation}
On the other hand, by Lemma \ref{inequa}, for any $t>0$,
\[
u(\gamma_3(1-\delta)\ge 2t-12R.
\]
In particular, letting $t=1-\delta$, it follows that
\[
u(\gamma_3(1-\delta))\ge2(1-\delta)-12R\ge2-2\delta-12\epsilon.
\]
This contradicts \eqref{proineq} and hence completes the proof.
\end{proof}

Finally we give an explanation how to prove Theorem \ref{Main2}.
\begin{proof}[Sketch proof of Theorem \ref{Main2}]
We can apply Proposition \ref{pro} and Lemma \ref{inequa2} to prove Theorem
\ref{Main2}. In fact the argument in this case is exactly the same as the proof
of Theorem \ref{Main}. Here we omit the details.
\end{proof}

\section{Appendix}\label{sec4}
In this part we will give a number estimate for ends of a class of SMMSs.
The weight $f$ allows to be certain quadratic growth of
distance function, which improves the growth of $f$ in \cite{[Wu]}.
\begin{theorem}\label{Endest}
Let $(M,g,e^{-f}dv)$ be an $n$-dimensional complete smooth metric measure space.
Fix a point $o\in M$ and $R>0$. Suppose $\mathrm{Ric}_f\ge-(n-1)$ in the geodesic
ball $B_o(R)$ and $\mathrm{Ric}_f\ge\frac 12$ outside $B_o(R)$. If
$f(x)\le\frac 14d^2(x,B_o(R))+c$ for some constant $c>0$ on $M$, then
\begin{equation}\label{number}
N_{R}(M)\le\frac{2(n+4A)}{n+4A-1}\cdot\frac{e^{\frac{17}{2}(n+4A-1)R}}{R^{n+4A}}
\end{equation}
where $N_{R}(M)$ is the number of ends of $M$ with respect to
$B_o(R)$, and $A:=A(R)=\sup_{x\in B_o(25/2\,R)}|f(x)|$.
\end{theorem}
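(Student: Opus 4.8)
\textbf{Proof proposal for Theorem \ref{Endest}.}
The plan is to adapt the volume-comparison counting argument of Cai \cite{[Cai]} (see also \cite{[LT]}, \cite{[Wu]}) to the weighted setting, using the weighted volume comparison in Lemma \ref{vcomp} together with the geometric separation estimate in Lemma \ref{inequa2}. First I would fix, for each end $E_i$ of $M$ with respect to $B_o(R)$, a ray $\gamma_i$ emanating from $B_o(R)$ into $E_i$; since the ends are distinct, Lemma \ref{inequa2} gives the quantitative separation $d(\gamma_i(t_1),\gamma_j(t_2))>t_1+t_2-6R$ for $i\ne j$ and all $t_1,t_2\ge 0$. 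Next I would choose a suitable scale $\rho$ (a fixed multiple of $R$, to be pinned down by the constants in \eqref{number}) and look at the points $p_i:=\gamma_i(\rho)$ together with the balls $B_{p_i}(\rho/2)$ or a comparably chosen radius; the separation estimate, once $\rho$ is large enough relative to $R$ (e.g.\ $\rho$ on the order of $6R$ or a bit more), forces these balls to be pairwise disjoint. At the same time all of them are contained in one large ball $B_o(L)$ with $L$ again a fixed multiple of $R$ — this is why the supremum $A$ is taken over $B_o(25R/2)$, which should exactly accommodate $L$ plus the radius of the small balls.

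The heart of the estimate is then a packing bound: since the $N_R(M)$ small balls $B_{p_i}(\rho/2)$ are disjoint and sit inside $B_o(L)$, we have
\[
N_R(M)\cdot \min_i V_f\big(B_{p_i}(\rho/2)\big)\le V_f(B_o(L)).
\]
To turn this into \eqref{number} I would bound each $V_f(B_{p_i}(\rho/2))$ from below and $V_f(B_o(L))$ from above by comparing both to $V_f(B_{p_i}(L'))$ for a common larger radius $L'$ (large enough that $B_o(L)\subset B_{p_i}(L')$, so that $V_f(B_o(L))\le V_f(B_{p_i}(L'))$), and applying Lemma \ref{vcomp} to the pair $(\rho/2, L')$ centered at $p_i$. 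Because $\mathrm{Ric}_f\ge-(n-1)$ only holds inside $B_o(R)$ and we actually have $\mathrm{Ric}_f\ge \tfrac12\ge-(n-1)$ everywhere, Lemma \ref{vcomp} applies with $K=1$ on all the relevant balls, giving
\[
\frac{V_f(B_{p_i}(L'))}{V_f(B_{p_i}(\rho/2))}\le \frac{\int_0^{L'}(\sinh t)^{n-1+4A}dt}{\int_0^{\rho/2}(\sinh t)^{n-1+4A}dt}.
\]
Since all radii here are small (bounded by a fixed multiple of $R<1$, hence the relevant interval has length $\le$ something like $\tfrac{25}{2}R$), I would use the elementary bounds $t\le\sinh t\le t\,e^{t}$ to replace the $\sinh$-integrals by powers of the radius times an exponential factor $e^{(n-1+4A)L'}$, producing $\int_0^{\rho/2}(\sinh t)^{n-1+4A}dt\ge \frac{(\rho/2)^{n+4A}}{n+4A}$ and $\int_0^{L'}(\sinh t)^{n-1+4A}dt\le \frac{(L')^{n+4A}}{n+4A}e^{(n+4A-1)L'}$ (or a slight variant). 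Choosing $\rho$ and $L'$ as the appropriate multiples of $R$ — the factor $e^{\frac{17}{2}(n+4A-1)R}$ and the $R^{-(n+4A)}$ in \eqref{number} tell me the bookkeeping should come out with $L'$ of order $\tfrac{17}{2}R$ and $\rho/2$ of order $R$ — then yields exactly \eqref{number} after collecting constants.

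The main obstacle I anticipate is purely one of constants: the separation Lemma \ref{inequa2} only gives $d(\gamma_i(t_1),\gamma_j(t_2))>t_1+t_2-6R$, so to make the small balls $B_{p_i}(\rho/2)$ disjoint I need $\rho-6R>\rho/2+\rho/2$, which fails for the naive choice; the fix is to take the small balls of radius strictly less than $(\rho-6R)/2$ (say centered slightly further out along the rays, at parameter a bit above $6R$), and then track carefully how this forces the enclosing radius $L'$ up to $\tfrac{17}{2}R$ and the supremum radius to $\tfrac{25}{2}R$. A secondary point requiring care is that $A$ depends on the radius, so one must fix the geometric radii \emph{first} (in terms of $R$ alone), then read off $A=A(R)=\sup_{B_o(25R/2)}|f|$, and only afterwards apply Lemma \ref{vcomp} with that $A$; there is no circularity because the radii are chosen independently of $f$. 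Everything else — the triangle-inequality verification that the balls lie in $B_o(L)$, and the elementary $\sinh$ estimates — is routine.
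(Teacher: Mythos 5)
Your proposal is correct and follows essentially the same route as the paper: distinct ends give the separation of Lemma \ref{inequa2}, one packs disjoint small balls near the ray points inside a controlled large ball, and the weighted volume comparison of Lemma \ref{vcomp} (valid globally with $K=1$ since $\mathrm{Ric}_f\ge\frac12\ge-(n-1)$ everywhere) plus elementary $\sinh$ bounds turns the packing count into \eqref{number}. The only organizational difference is that the paper does not center the disjoint balls at the $\gamma_i(4R)$ themselves; it takes a maximal family of disjoint balls $B_{p_j}(R/2)$ on the sphere $S_o(4R)$, notes the doubled balls $B_{p_j}(R)$ cover $S_o(4R)$, and uses the separation only to show each $B_{p_j}(R)$ contains at most one $\gamma_i(4R)$. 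Your direct version works just as well, and in fact your "main obstacle" dissolves once you fix a small arithmetic slip: with $t_1=t_2=\rho$ the separation is $2\rho-6R$, not $\rho-6R$, so at $\rho=4R$ one gets $d(\gamma_i(4R),\gamma_j(4R))>2R$ and balls of radius $R/2$ centered at the $\gamma_i(4R)$ are already disjoint; these sit in $B_o(\frac92R)\subset B_{p_i}(\frac{17}{2}R)\subset B_o(\frac{25}{2}R)$, which reproduces the paper's constants exactly (using $\sinh t\le e^t/2$ and $\sinh t\ge t$ for the final ratio).
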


The proof follows by Lemmas \ref{inequa2} and \ref{vcomp} by using the arguments
of \cite{[Cai],[Wu]}. We include it for the readers' convenience.
\begin{proof}[Proof of Theorem \ref{Endest}]
For any a point $o\in M$, let $\gamma_1, \gamma_2, ..., \gamma_k$ be $k$ rays with
$k$ different ends starting from the base point $o$. Then we only need to give an
upper bound of the number $k$.

For a fixed $R>0$, consider the sphere $S_o(4R)$ and let $\{p_j\}$ be
a maximal set of points on $S_o(4R)$ such that the balls $B_{p_j}(R/2)$
are disjoint each other. Clearly, the balls $B_{p_j}(R)$ cover $S_o(4R)$.
Since the set $\{\gamma_i(4R), i=1,2...,k\}$ is contained in $S_o(4R)$,
each $\gamma_i(4R)$ is contained in some $B_{p_j}(R)$.
From Lemma \ref{inequa2} with $t=4R$, we know that each ball $B_{p_j}(R)$
contains at most one $\gamma_i(4R)$, and hence the number of balls is not less than
$k$. Therefore, to estimate an upper bound of $k$,
it suffices to bound the number of balls $B_{p_j}(R/2)$.

By the weighted volume comparison \eqref{volcomp}, using a fact that
$B_{p_j}(R/2)\subset B_o(\tfrac{9}{2}R)\subset B_{p_j}(\tfrac{17}{2}R)$,
we have
\[
V_f\left(B_{p_j}(\tfrac{17}{2}R)\right)
\le\frac{\displaystyle{\int^{\tfrac{17}{2}R}_0\left(\sinh^{n+4\widetilde{A}-1}t\right)dt}}{\displaystyle{\int^{R/2}_0
\left(\sinh^{n+4\widetilde{A}-1}t\right)dt}} V_f\left(B_{p_j}(R/2)\right),
\]
where $\widetilde{A}=\sup_{x\in B_{p_j}\left(\tfrac{17}{2}R\right)}|f(x)|$.
Therefore, the number of balls $B_{p_j}(R/2)$ is no more than
\[
\frac{\displaystyle{\int^{\tfrac{17}{2}R}_0\left(\sinh^{n+4A-1}t\right)dt}}{\displaystyle{\int^{R/2}_0
\left(\sinh^{n+4A-1}t\right)dt}},
\]
where $A=\sup_{x\in B_o\left(\tfrac{25}{2}R\right)}|f(x)|$ because $B_{p_j}\left(\tfrac{17}{2}R\right)\subset B_o\left(\tfrac{25}{2}R\right)$.
Notice that
\[
\frac{\displaystyle{\int^{\tfrac{17}{2}R}_0\left(\sinh^{n+4A-1}t\right)dt}}{\displaystyle{\int^{R/2}_0
\left(\sinh^{n+4A-1}t\right)dt}}
\le\frac{2(n+4A)}{n+4A-1}\cdot\frac{e^{\frac{17}{2}(n+4A-1)R}}{R^{n+4A}}
\]
and the upper estimate follows.
\end{proof}

\vspace{.1in}

\textbf{Acknowledgement}.
The authors thank the anonymous referee for valuable comments and useful suggestions to
improve the presentations of this work. B. Hua is supported by NSFC (No.11831004).

\bibliographystyle{amsplain}

\begin{thebibliography}{30}
\bibitem{[BE]}D. Bakry, M. Emery, Diffusion hypercontractivitives,
in: S\'{e}minaire de Probabilit\'{e}s XIX, 1983/1984, in: Lecture
Notes in Math., vol. 1123, Springer-Verlag, Berlin, 1985, pp.
177-206.

\bibitem{[Cai]} M.-L. Cai, Ends of Riemannian manifolds with nonnegative
Ricci curvature outside a compact set, Bulletin of the AMS,
24 (1991),  371-377.

\bibitem{[CCY]} M.-L. Cai, T.H. Colding, D.-G. Yang, A gap theorem for ends of
complete manifolds, Proc. AMS. 123 (1), (1995), 247-250.

\bibitem{[CG]} J. Cheeger, D. Gromoll, The splitting theorem for manifolds of nonnegative
Ricci curvature. J. Diff. Geom. 6 (1971/72), 119-128.

\bibitem{[FLZ]} F.-Q. Fang, X.-D. Li, Z.-L. Zhang, Two generalizations of
Cheeger-Gromoll splitting theorem via Bakry-Emery Ricci curvature,
Annales de l'Institut Fourier 59 (2009), 563-573.

\bibitem{[Hami]} R. Hamilton, The formation of singularities in the Ricci
flow, Surveys in Differential Geom. 2 (1995), 7-136, International Press.

\bibitem{[LT]}P. Li, L.-F. Tam, Positive harmonic functions on complete
manifolds with non-negative curvature outside a compact set, Ann. of
Math. 125 (1987), 171-207.

\bibitem{[Lic]}A. Lichnerowicz, Vari\'et\'es riemanniennes $\grave{a}$ tenseur C non n\'egatif,
C.R. Acad. Sc. Paris Serie A, 271 (1970), A650-A653.

\bibitem{[Lim]} A. Lim, The splitting theorem and topology of noncompact spaces with
nonnegative $N$-Bakry-\'Emery Ricci curvature, Proc. Amer. Math. Soc. 149 (2021), 3515-3529.

\bibitem{[Lo]}J. Lott, Some geometric properties of the Bakry-\'{E}mery-Ricci
tensor, Comment. Math. Helv. 78 (2003), 865-883.

\bibitem{[LV]}J. Lott, C. Villani, Ricci curvature for metric-measure spaces
via optimal transport, Ann. of Math. 169 (2009), 903-991.

\bibitem{[MSW]} O. Munteanu, F. Schulze, J.-P. Wang, Positive solutions to Schr\"odinger equations and geometric applications, J. Reine Angew. Math. 774 (2021), 185-217.

\bibitem{[MuWa11]} O. Munteanu, J.-P. Wang, Smooth metric measure spaces with
non-negative curvature, Comm. Anal. Geom. 19 (2011), 451-486.

\bibitem{[MuWa12]} O. Munteanu, J.-P. Wang,  Analysis of weighted Laplacian and
applications to Ricci solitons, Comm. Anal. Geom. 20 (2012), 55-94.

\bibitem {[MuW]} O. Munteanu, J. Wang, Geometry of manifolds with densities,
Adv. Math. 259 (2014), 269-305.

\bibitem{[MuWa14e]} O. Munteanu, J.-P. Wang, Holomorphic functions on K\"ahler-Ricci solitons,
J. Lond. Math. Soc. 89 (2014), 817-831.

\bibitem{[P1]} G. Perelman, The entropy formula for the Ricci flow and its geometric applications,
arXiv: math.DG/0211159.

\bibitem{[P2]} G. Perelman, Ricci flow with surgery on three-manifolds, arXiv:math.DG/0303109.

\bibitem{[P3]} G. Perelman, Finite extinction time for the solutions to the Ricci flow on certain
three-manifolds, arXiv:math.DG/0307245.

\bibitem{[WW]}G.-F. Wei, W. Wylie, Comparison geometry for the
Bakry-\'{E}mery Ricci tensor, J. Diff. Geom. 83 (2009), 377-405.

\bibitem{[Wug]}G.-Q. Wu, Splitting theorem for Ricci soliton,
 Proc. AMS. 149 (2021), 3575-3581.

\bibitem{[Wu]}J.-Y. Wu, Counting ends on complete smooth metric measure spaces,
 Proc. AMS. 144 (2016), 2231-2239.

\end{thebibliography}

\end{document}